\newcommand{\snort}{\textsc{Snort }}
\newcommand{\col}{\textsc{Col }}
\newcommand{\nogo}{\textsc{NoGo }}
\newcommand{\domineering}{\textsc{Domineering }}
\newcommand{\Snort}{\textsc{Snort}}
\newcommand{\Col}{\textsc{Col}}
\newcommand{\Nogo}{\textsc{NoGo}}
\newcommand{\Domineering}{\textsc{Domineering}}
\newcommand{\facet}[1]{\mathcal{F}(#1)}
\newcommand{\SR}[1]{\mathcal{N}(#1)}
\newcommand{\SRideal}[1]{\mathcal{N}(\Delta_{#1})}
\newcommand{\facetI}[1]{\mathcal{F}(\Delta_{#1})}
\newcommand{\SRsc}[1]{\mathcal{N}(I_{#1})}
\newcommand{\facetsc}[1]{\mathcal{F}(I_{#1})}
\newcommand{\legalI}[1]{\mathcal{L}_{#1}}
\newcommand{\illegalI}[1]{\mathcal{ILL}_{#1}}
\newcommand{\illegalcomp}[1]{\Gamma_{#1}}
\renewcommand{\L}{\mathfrak{L}}
\newcommand{\R}{\mathfrak{R}}
\theoremstyle{definition} \newtheorem{definition}{Definition}[section]
\theoremstyle{plain} \newtheorem{theorem}[definition]{Theorem}
\theoremstyle{plain} \newtheorem{corollary}[definition]{Corollary}
\theoremstyle{plain} \newtheorem{proposition}[definition]{Proposition}
\theoremstyle{plain} \newtheorem{lemma}[definition]{Lemma}
\theoremstyle{plain} 
\theoremstyle{definition} \newtheorem{example}[definition]{Example}
\theoremstyle{remark} \newtheorem{remark}[definition]{Remark}
\theoremstyle{definition} \newtheorem{construction}[definition]{Construction}
\DeclareRobustCommand\onedot{\futurelet\@let@token\@onedot}
\def\@onedot{\ifx\@let@token.\else.\null\fi\xspace}
\def\ie{{i.e}\onedot}
\def\etal{{et al}\onedot}
\date{}
\begin{document}
\title{Simplicial Complexes are Game Complexes}
\author{Sara Faridi}
\address{Dalhousie University\\
Halifax, Nova Scotia\\
B3H 3J5, Canada}

\author{Svenja Huntemann}
\email{svenja.huntemann@dal.ca}

\author{Richard J. Nowakowski}

\keywords{Combinatorial games, simplicial complexes, placement game.}
\subjclass[2010]{Primary 91A46; Secondary 05E45, 13F55;}

\thanks{The authors' research was supported in part by the Natural Sciences and Engineering Research Council of Canada. The second author's research was also supported by the Killam Trust. Authors Faridi and Huntemann are grateful to the math department of the Technische Universit\"at Darmstadt for their hospitality while working on this project.}

\begin{abstract}
Strong placement games (SP-games) are a class of combinatorial games whose structure allows one to describe the game via simplicial complexes. A natural question is whether well-known parameters of combinatorial games, such as ``game value'', appear as invariants of the simplicial complexes. This paper is the first step in that direction. We show that every simplicial complex encodes a certain type of SP-game (called an ``invariant SP-game'') whose ruleset is independent of the board it is played on. We also show that in the class of SP-games isomorphic simplicial complexes correspond to isomorphic game trees, and hence equal game values. We also study a subclass of SP-games corresponding to flag complexes, showing that there is always a game whose corresponding complex is a flag complex no matter which board it is played on.
\end{abstract}
\maketitle

\section{Introduction}
The purpose of this paper is to unravel some of the algebraic structure underlying combinatorial games. We show that each simplicial complex is the legal complex of some invariant strong placement game (iSP-game) and board. One implication is that in most situations when studying strong placement games (SP-games) it is enough to consider those with invariance. These results will for example make it easier to study whether each game value under normal play can be achieved by an SP-game, which would affect the study of combinatorial games in general.

In \cite{FHN14a} we initiated the idea of using simplicial complexes to algebraically describe SP-games, a class of combinatorial games. To each SP-game we can assign two simplicial complexes, one representing all legal positions, the so called \textbf{legal complex}, and one representing the minimal illegal positions, the \textbf{illegal complex}. One of the main questions is what complexes appear as game complexes. In Proposition \ref{thm:legalillegal} we show that every simplicial complex is both a legal and an illegal complex of some SP-game and board. The rulesets of these  games can be quite complex though and depend highly on the board on which the game is being played. Thus we introduce \textbf{invariance} for SP-games, which, in a sense, forces rulesets to be uniform. Invariance is a concept that was introduced for subtraction games (see for example \cite{DucheneRigo10}, \cite{Larsson12a}, \cite{LHF11}), where it is defined slightly differently due to the different class of games, but has the same intent, namely that the ruleset does not depend on the board. Similar to the previous question, we are interested in which simplicial complexes come from invariant SP-games. \cref{lem:illegal} shows that every simplicial complex without an isolated vertex is the illegal complex of some iSP-game, and also that every simplicial complex is the legal complex of an iSP-game. The constructions given in all cases prove the stronger result that such SP-games exist given \textit{any} bipartition of the vertices of the simplicial complex (see \cref{thm:Gammagameandgraph,thm:legalinv}) into Left and Right positions. This construction then allows us to show that for every SP-game there exists an iSP-game such that their game trees are isomorphic. This in turn implies that their game values are the same under both normal and mis\`ere winning conditions. Thus  it is enough to only consider iSP-games in most situations. 
 
Finally, we restrict to independence games, those games for which the ruleset played on any board gives an illegal complex which is a graph. This class includes many games actually played, such as \Snort, \Col, and \Domineering, but not \textsc{NoGo}. We show that any SP-game whose illegal complex is a graph is literally equal to an invariant independence game. \\
 
In the next two subsections, we give the background in combinatorial game theory and algebra needed for the paper. Please see any of \cite{ANW07,BCG04,Siegel13} for further information in combinatorial game theory and \cite{BH93,HH11} for the algebra involved. We then show that each simplicial complex is a game complex, and finally in \cref{sec:iSP} we consider invariant SP-games, and independence games in \cref{sec:independence}.

\subsection{Combinatorial Game Theory}

A \textbf{combinatorial game} is a 2-player game with perfect information and no chance, where the two players are \textbf{Left} and \textbf{Right} (denoted by $L$ and $R$ respectively) and they do not move simultaneously. For the purposes of this paper, the winning condition is irrelevant as long as it does not contradict the other conditions for the games. 

We denote a combinatorial game by its name in \textsc{Small Caps}.

In this paper, a \textbf{board} will be a finite graph. The \textbf{pieces}, which can be thought of as tokens or as subgraphs of the board, will be placed on a non-empty collection of vertices --- exactly how is given by the \textbf{ruleset}. For a game $G$ consisting of a ruleset $R$ played on a board $B$ we will use the notation $G=(R,B)$. A \textbf{position} is a configuration of pieces on the board. A position that can be reached through a sequence of legal moves is called a \textbf{legal position}, otherwise we call it an \textbf{illegal position}. A \textbf{basic position} is a board with only one piece placed.

Given a game $G$, the \textbf{game tree} of $G$ is a directed graph, which is a tree, with the edges labelled $L$ or $R$. The vertices of the tree correspond to positions and $X\stackrel{L}{\rightarrow} Y$ if there is a legal move for Left from position $X$ to $Y$. Similarly edges labelled with an $R$ represent moves by Right. The games we consider all have finite game trees. Two games whose game trees have isomorphic structure are called \textbf{literally equal}. Note that two games that are literally equal will be equal under any winning condition.

Brown \etal \cite{BCHMMNS14} introduced a subclass of combinatorial games, which they called placement games. Their conditions are slightly weaker than what is required for this work, and we thus call our games ``strong placement games''. 

\begin{definition}\label{def:placementgame}
A \textbf{strong placement game (SP-game)} is a combinatorial game which satisfies the following:
\begin{itemize}
\item[(i)] The board is empty at the beginning of the game.
\item[(ii)] Players place pieces on empty spaces of the board according to the rules.
\item[(iii)] Pieces are not moved or removed once placed.
\item[(iv)] The rules are such that if it is possible to reach a position through a sequence of legal moves, then any sequence of moves leading to this position consists  of legal moves.
\end{itemize}
\end{definition}

Note that condition (iv) in the above definition is necessary for each position to be independent of the order of moves, which results in commutativity when representing positions by monomials (see \cref{sec:gamecomplexes}) and for the hypergraphs representing the game being simplicial complexes.

This condition also implies that any position, whether legal or illegal, in an SP-game can be decomposed into basic positions.

The rulesets in \cref{def:games} together with a board are examples of SP-games and will be used throughout the document. The first two have been introduced early in the development of game theory (see \cite{BCG04}), but, surprisingly, not much is known about them. 

\begin{example}\label{def:games}
In \Snort, players place a piece on a single vertex which is not adjacent to a vertex containing a piece from their opponent.

In \Col, players place a piece on a single vertex which is not adjacent to a vertex containing one of their own pieces.

In \nogo (see \cite{ChouTY11}), players place a piece on a single unoccupied vertex. At every point in the game, for each maximal group of connected vertices of the board that contain pieces placed by the same player, at least one of these needs to be adjacent to an empty vertex.

In \domineering (see \cite{Berlekamp88} and \cite{LMR02}), which is played on grids, both players place dominoes. Left may only place vertically, and Right only horizontally. The vertices of the board are the squares of the grid, and each piece occupies two vertices. 
 
\end{example}

Other examples of SP-games are \textsc{Node-Kayles} and \textsc{Arc-Kayles} (see for example \cite{Bodlaender93}, \cite{FT06}, \cite{Schaefer78}).

Many combinatorial games, especially SP-games, have a natural tendency to break up into smaller, independent components as play progresses. For example, after several moves the empty spaces could be split into many disconnected components and a player, on their move, then has to choose a component to move in. From this, we define a sum on games as follows: The \textbf{disjunctive sum} {\boldmath $G_1+G_2$} of two games $G_1$ and $G_2$ is the game in which at each step the current player can decide to move in either game, but not both.

Depending on a fixed winning condition, combinatorial games can be divided into equivalence classes. 
The equivalence class a game belongs to is called its \textbf{game value}. Game values form a partially ordered semi-group under disjunctive sum. When a game consists of the disjunctive sum of two sub-games on different boards, then to find the game value it is sufficient to calculate the game values of the summands and taking advantage of the additive structure. This is a very useful concept in combinatorial game theory. For more details see \cite{Siegel13}.

\subsection{Combinatorial Commutative Algebra}

Simplicial complexes are one of the main constructs we use to study SP-games. We begin by introducing the required concepts.

\begin{definition}
An (abstract) \textbf{simplicial complex} $\Delta$ on a finite vertex set $V$ is a set of subsets, called \textbf{faces}, of $V$ with the conditions that if $A\in \Delta$ and $B\subseteq A$, then $B\in \Delta$. The \textbf{facets} of a simplicial complex $\Delta$ are the maximal faces of $\Delta$ with respect to inclusion. A \textbf{non-face} of a simplicial complex $\Delta$ is a subset of its vertices that is not a face. 
\end{definition}

Note that a simplicial complex with a fixed vertex set is uniquely determined by its facets. Thus a simplicial complex $\Delta$ with facets $F_1,\ldots, F_k$ is denoted by $\Delta=\langle F_1,\ldots, F_k\rangle$. The vertex set of $\Delta$ is also denoted as $V(\Delta)$.

A simplicial complex of the form $\Delta=\langle \{i_1,i_2,\ldots, i_r\}\rangle$, where $V(\Delta)=\{i_1,i_2,\ldots, i_r\}$, is called a \textbf{simplex}.

\begin{definition}
Given a face $F$ of a simplicial complex $\Delta$, its \textbf{dimension} $\dim(F)$ is $|F|-1$. The dimension of the simplicial complex $\Delta$ is the maximum dimension of any of its faces. A simplicial complex $\Delta$ is called \textbf{pure} if all its facets are of the same dimension. The $\mathbf{k}$\textbf{-skeleton} $\Delta^{[k]}$ of a simplicial complex $\Delta$ is the simplicial complex whose facets are the $k$-dimensional faces of $\Delta$.
\end{definition}

The other structures used to study SP-games are square-free monomial ideals, which we introduce now.

\begin{definition}
Let $k$ be a field and $S$ the polynomial ring $k[x_1,\ldots,x_n]$. A product $x_{1}^{a_{1}}\ldots x_{n}^{a_{n}}\in S$, where the $a_i$ are non-negative integers, is called a \textbf{monomial}. Such a monomial is called \textbf{square-free} if each $a_i$ is either 0 or 1.
\end{definition}
\begin{definition}
Let $k$ be a field and $S$ the polynomial ring $k[x_1,\ldots,x_n]$. A \textbf{monomial ideal} of $S$ is an ideal generated by monomials in $S$. A monomial ideal is called a \textbf{square-free monomial ideal} if it is generated by square-free monomials.
\end{definition}

Let $k$ be a field and $S=k[x_1,\ldots,x_n]$ a polynomial ring. There is a one-to-one correspondence between subsets $\{i_1,\ldots, i_r\}$ of $[n]$ and square-free monomials $x_{i_1}\cdots x_{i_r}$ of $S$. Using this observation we can associate to a square-free monomial ideal two unique simplicial complexes: the facet complex and the Stanley-Reisner complex.

\begin{definition}\label{def:SRFcomplex}
The \textbf{facet complex} of a square-free monomial ideal $I$ of $S$, denoted by $\facetsc{}$, is the simplicial complex whose facets correspond to the square-free monomials in the minimal generating set of $I$. The \textbf{Stanley-Reisner complex} of a square-free monomial ideal $I$ of $S$, denoted by $\SRsc{}$, is the simplicial complex whose faces correspond to the square-free monomials not in $I$. In other words,
\begin{align*}
\facetsc{}&=\langle \{i_1,\ldots, i_r\}\mid x_{i_1}\cdots x_{i_r}\text{ minimal generator of }I\rangle\text{ and}\\
\SRideal{}&=\langle \{i_1, \ldots, i_r\}\mid x_{i_1}\cdots x_{i_r}\not\in I\rangle.
\end{align*}
\end{definition}

This correspondence works in the opposite direction as well.

\begin{definition}\label{def:SRFideal}
The \textbf{facet ideal} of a simplicial complex $\Delta$, denoted by $\facetI{}$, is the ideal of $S$ generated by the monomials corresponding to the facets of $\Delta$. The \textbf{Stanley-Reisner ideal} of a simplicial complex $\Delta$, denoted by $\SRideal{}$, is the ideal of $S$ generated by the monomials corresponding to the minimal non-faces of $\Delta$. In other words,
\begin{align*}
\facetI{}&=\left(x_{i_1}\cdots x_{i_r}\mid \{i_1,\ldots,i_r\}\text{ facet of }\Delta\right)\text{ and}\\
\SRideal{}&=\left(x_{i_1}\cdots x_{i_r}\mid \{i_1,\ldots,i_r\}\not\in\Delta\right).
\end{align*}
\end{definition}

Note that although it is common that for a given simplicial complex $\Delta$ the vertex set simply consists of the 0-dimensional faces of $\Delta$, this is not always the case. Due to the difference in underlying rings, successively applying the Stanley-Reisner and facet operators would result in a different simplicial complex.

\subsection{Game Complexes and Ideals}\label{sec:gamecomplexes}

We now introduce the construction of simplicial complexes and square-free monomial ideals which are related to SP-games. Unless otherwise specified, let the underlying ring be $S=k[x_1,\ldots,x_m,y_1,\ldots, y_n]$, where $k$ is a field, $m$ the number of basic positions with a Left piece, and $n$ the number of basic positions with a Right piece.

A square-free monomial $z$ of $S$ represents a position $P$ in the game if it is the product over those $x_i$ and $y_j$ such that Left has played in the basic position $i$ and Right has played in the basic position $j$ in order to reach $P$. By condition (iv) in \cref{def:placementgame}, the order of moves to reach $P$ does not matter, thus we have commutativity.

\begin{example}\label{ex:O12basicpositions}
Consider the game \textsc{O12}, in which Left claims a single vertex, and Right two adjacent vertices, played on $P_4$. We number the vertices consecutively from one end as 1, 2, 3, 4. The Left basic position $i$ is the position in which Left has played on vertex $i$, and the Right basic position $j$ is the position in which Right has played on vertices $j$ and $j+1$. Since Left has 4 basic positions, and Right has 3, the underlying ring is $S=k[x_1,x_2,x_3,x_4,y_1,y_2,y_3]$. The position
\begin{center}
\begin{tikzpicture}[vertex/.style={circle, draw, minimum size=8mm, font=\small}]
	\node[vertex] (1) at (0,0) {$L$};
	\node[vertex] (2) at (1,0) {};
	\node[vertex] (3) at (2,0) {$R$};
	\node[vertex] (4) at (3,0) {$R$};
	\draw (1)--(2)--(3)--(4);
	\begin{scriptsize}
	\node at (0,0.75) {1};
	\node at (1,0.75) {2};
	\node at (2,0.75) {3};
	\node at (3,0.75) {4};
	\end{scriptsize}
\end{tikzpicture}
\end{center}
is represented by the monomial $x_1y_3$.
\end{example}

A legal position is called a \textbf{maximal legal position} if placing any further piece is illegal, \ie it is not properly contained in any other legal position.

If we sort the monomials representing illegal positions by divisibility, the positions corresponding to the minimal elements are called \textbf{minimal illegal positions}. Equivalently, an illegal position is a minimal illegal position if any proper subset of the pieces placed forms a legal position.

\begin{definition}\cite{FHN14a}\label{def:gameComplexIdeal}
If $(R,B)$ is an SP-game, then
\begin{itemize}
\item The \textbf{legal ideal}, $\legalI{R,B}$, is the ideal of $S$ generated by the monomials representing maximal legal positions.
\item The \textbf{illegal ideal}, $\illegalI{R,B}$, is the ideal generated by the monomials representing minimal illegal positions.
\item The \textbf{legal complex}, $\Delta_{R,B}$, is the facet complex of the legal ideal.
\item The \textbf{illegal complex}, $\Gamma_{R,B}$, is the facet complex of the illegal ideal.
\end{itemize}
\end{definition}

Some of the results we discuss in this paper hold for both the legal and illegal complex of some game and board. For brevity, we will use the term \textbf{game complex} when discussing a simplicial complex which is either a legal or illegal complex.

Note that condition (iv) in \cref{def:placementgame} implies that the order of moves does not matter, which gives us commutativity when representing positions by monomials. Thus the legal and illegal ideal are indeed commutative ideals. The condition also implies that given any legal position, any subset of the pieces played gives a legal position as well, and thus the hypergraphs representing the game are indeed simplicial complexes.

The next example demonstrates these concepts. This also illustrates again that the vertices of the complexes are the basic positions, not the vertices of the board. 
\begin{example}
Consider \textsc{O12} played on $P_3$. Similar to \cref{ex:O12basicpositions} the underlying ring is $S=k[x_1,x_2,x_3,y_1,y_2]$.

The maximal legal positions are represented by the monomials $x_1x_2x_3$, $x_1y_2$, and $x_3y_1$. Thus we have the legal ideal \[\legalI{\textsc{O12}, P_3}=(x_1x_2x_3,x_1y_2, x_3y_1).\]  The legal complex is given in \cref{fig:domex2}.

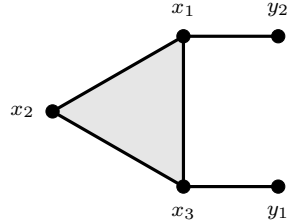
\begin{figure}[!ht]
\begin{center}
\begin{tikzpicture}[line cap=round,line join=round,>=triangle 45,x=1.0cm,y=1.0cm]
\filldraw[line width=1.2pt,fill=black,fill opacity=0.1] (0.,1.) -- (1.74,0.) -- (1.74,2) -- cycle;
\draw [line width=1.2pt] (1.74,0.)-- (3.,0.);
\draw [line width=1.2pt] (1.74,2)-- (3.,2.);
\begin{scriptsize}
\draw [fill=black] (0.,1.) circle (2.5pt);
\draw[color=black] (-0.4,1) node {$x_2$};
\draw [fill=black] (1.74,0.) circle (2.5pt);
\draw[color=black] (1.74,-0.35) node {$x_3$};
\draw [fill=black] (1.74,2) circle (2.5pt);
\draw[color=black] (1.74,2.35) node {$x_1$};
\draw [fill=black] (3.,0.) circle (2.5pt);
\draw[color=black] (3,-0.35) node {$y_1$};
\draw [fill=black] (3.,2.) circle (2.5pt);
\draw[color=black] (3,2.35) node {$y_2$};
\end{scriptsize}
\end{tikzpicture}
\end{center}
\caption{The legal complex $\Delta_{\textsc{O12}, P_3}$}
\label{fig:domex2}
\end{figure}

The minimal illegal positions are represented by the monomials $x_1y_1$, $x_2y_1$, $x_2y_2$, $x_3y_2$, and $y_1y_2$. Thus we have the illegal ideal
\[\illegalI{\textsc{O12}, P_3}=(x_1y_1,x_2y_1, x_2y_2, x_3y_2, y_1y_2).\] The illegal complex is given in \cref{fig:domex3}.

\begin{figure}[!ht]
\begin{center}
\begin{tikzpicture}[line cap=round,line join=round,>=triangle 45,x=1.0cm,y=1.0cm]
\draw [line width=1.2pt] (1.,1.)-- (2.,-1.);
\draw [line width=1.2pt] (3.,1.)-- (2.,-1.);
\draw [line width=1.2pt] (3.,1.)-- (4.,-1.);
\draw [line width=1.2pt] (5.,1.)-- (4.,-1.);
\draw [line width=1.2pt] (2.,-1.)-- (4.,-1.);
\begin{scriptsize}
\draw [fill=black] (1.,1.) circle (2.5pt);
\draw[color=black] (1,1.35) node {$x_1$};
\draw [fill=black] (2.,-1.) circle (2.5pt);
\draw[color=black] (2,-1.35) node {$y_1$};
\draw [fill=black] (3.,1.) circle (2.5pt);
\draw[color=black] (3,1.35) node {$x_2$};
\draw [fill=black] (4.,-1.) circle (2.5pt);
\draw[color=black] (4,-1.35) node {$y_2$};
\draw [fill=black] (5.,1.) circle (2.5pt);
\draw[color=black] (5,1.35) node {$x_3$};
\end{scriptsize}
\end{tikzpicture}
\end{center}
\caption{The illegal complex $\Gamma_{\textsc{O12}, P_3}$}
\label{fig:domex3}
\end{figure}
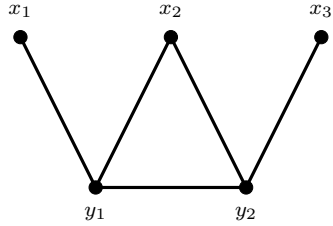
\end{example}

This example also illustrates the following result. See \cite{FHN14a} for more details.

\begin{remark}
Note that the faces of the legal complex $\Delta_{R,B}$ represent the legal positions of $(R,B)$, while the facets of $\Gamma_{R,B}$ represent the minimal illegal positions. In short we have
\begin{itemize}
	\item[(1)] $\legalI{R,B}=\facetI{R,B},$
	\item[(2)] $\illegalI{R,B}=\facet{\illegalcomp{R,B}}=\SRideal{R,B},$
\end{itemize}
or equivalently
\begin{itemize}
	\item[(1)] $\Delta_{R,B}=\facet{\legalI{R,B}}=\SR{\illegalI{R,B}},$
	\item[(2)] $\Gamma_{R,B}=\facet{\illegalI{R,B}}.$
\end{itemize}
This will be used throughout this paper.
\end{remark}

This relationship in particular also gives that $\Gamma_{R,B}=\facet{\SR{\Delta_{R,B}}}$, allowing us to move from the legal complex to the illegal and vice versa using the facet and Stanley-Reisner correspondences. Note though that this relationship only holds if we let the vertex set of the game complexes be all basic positions rather than just the 0-dimensional faces.

\begin{example}
Consider the SP-ruleset $R$ in which both players claim a single vertex, but Right may not place on vertex of degree 1, while Left has no restrictions for placement. The legal complex for playing on $B=P_3$ is then $\langle x_1x_2x_3,x_1y_2x_3\rangle$ with the underlying ring being $S=k[x_1,x_2,x_3,y_1,y_2,y_3]$. The illegal complex is $\Gamma=\langle y_1,y_3,x_2y_2\rangle$.

If we would let the underlying ring for $\Gamma$ be $S'=k[x_2,y_1,y_2,y_3]$, then $\SR{\facet{\Gamma}}=\langle x_2,y_2\rangle$. This is the legal complex of $R$ played on a single vertex, which is a very different game from $R$ played on $B$ as Left no longer has the two guaranteed moves at the ends.
\end{example}

Essentially, for the legal complex removing elements from the vertex set which are not 0-dimensional faces corresponds to removing illegal basic positions, or equivalently isolated vertices in the illegal complex, and does not change the game in any significant way. As seen in the example above, the same is not true though for the vertex set of the illegal complex: removing elements from the vertex set of the illegal complex which do not appear in any facets would imply removing a basic position which is always legal, thus changing the game significantly. For generality, we will keep letting the vertex set be the set of basic positions.

It is important to note that the legal and illegal complexes and corresponding ideals have an extra layer of structure. The monomials have elements $\{x_1,x_2,\ldots,x_m\}$ and  $\{y_1,y_2,\ldots,y_n\}$ and the complexes have their elements partitioned into those corresponding to the Left and Right basic positions. Thus, when showing a simplicial complex is equal to a game complex, we must also specify the partition.

In general, we call a simplicial complex whose vertex set is bipartitioned into sets $\L$ and $\R$ an $(\L,\R)$-labelled simplicial complex.

We occasionally also talk about isomorphic boards, with which we mean the boards are isomorphic as graphs and contain the same pieces. For SP-games we formally define a board isomorphism as follows.
\begin{definition}
Let $B_1$ and $B_2$ be two boards, potentially not empty. A map $\phi:B_1\rightarrow B_2$ is a \textbf{board isomorphism} if
\begin{enumerate}
\item $\phi$ is a graph isomorphism, that is a bijection of the vertex sets of $B_1$ and $B_2$ such that $\{v_1,v_2\}$ is an edge of $B_1$ if and only if $\{\phi(v_1),\phi(v_2)\}$ is an edge of $B_2$;
\item The vertex $v$ of $B_1$ contains a Left piece if and only if the vertex $\phi(v)$ of $B_2$ contains a Left piece; and
\item The vertex $w$ of $B_1$ contains a Right piece if and only if the vertex $\phi(w)$ of $B_2$ contains a Right piece.
\end{enumerate}
\end{definition}

The following proposition shows that two games with isomorphic legal complexes have isomorphic game trees, and as a consequence the same game value under most winning conditions (such as normal play and mis\`ere, see \cite{Siegel13}). Thus using simplicial complexes helps us to identify when two games are literally equal.

\begin{proposition}[\textbf{Isomorphic Game Trees of SP-Games}]\label{thm:isomorphicgametrees}
If two SP-games $(R_1,B_1)$ and $(R_2,B_2)$ have isomorphic legal complexes, then their game trees are isomorphic, \ie they are literally equal.
\end{proposition}
\begin{proof} We prove that isomorphic legal complexes imply isomorphic game trees by induction on the size of the faces (\ie the number of pieces in a position). The empty face (\ie empty board) corresponds to the root of the game tree, thus is trivially the same for both games.

Now assume that the game trees are isomorphic up to positions with $k$ pieces played.

Consider a position $P_1$ in the game $G_1$ played on $B_1$ with $k$ pieces played. Let $F_1$ be the face of $\Delta_{G_1,B_1}$ (of dimension $k-1$) corresponding to $P_1$. Since $\Delta_{G_1,B_1}$ and $\Delta_{G_2,B_2}$ are isomorphic, there exists a face $F_2\in \Delta_{G_2,B_2}$ (of dimension $k-1$) isomorphic to $F_1$, corresponding to a position $P_2$ of $G_2$, which also has $k$ pieces placed.

Now let $P_1'$ be any option of $P_1$ and $F_1'$ be the corresponding face in $\Delta_{G_1,B_1}$. Then there exists a vertex $v$ such that $F_1'=F_1\cup\{v\}$. Let $F_2'$ be the face of $\Delta_{G_2,B_2}$ isomorphic to $F_1'$. Then there exists a vertex $w$ (corresponding to $v$) such that $F_2'=F_2\cup\{w\}$. Thus the position $P_2'$ corresponding to $F_2'$ is an option of $P_2$.

Further, since the legal complexes have the same bipartition, we have that the following are equivalent:
\begin{enumerate}
\item The position $P_1'$ is a Left- (Right-)option of $P_1$.
\item The vertex $v$ belongs to $\L$ ($\R$).
\item The vertex $w$ belongs to $\L$ ($\R$).
\item The position $P_2'$ is a Left- (Right-)option of $P_2$.
\end{enumerate}

Thus for any option of $P_1$ there exists an option of $P_2$ and vice-versa, which shows that the game trees of $G_1,B_1$ and $G_2,B_2$ are isomorphic up to positions of $k+1$, and by induction they are entirely isomorphic.
\end{proof}

Note though that the converse of \cref{thm:isomorphicgametrees} is not true, as the following example demonstrates. We are grateful to Alex Fink for providing this example.
\begin{example}[Alex Fink]\label{ex:treeNotImpliesLegal}
Consider a ruleset $R$ in which all pieces occupy a single vertex, have to be adjacent to all previously placed ones, at most two pieces may be placed, and only Left may play. Then $\Delta_{R,B}\cong B$ for all boards $B$. In particular, consider $B_1$ being a disjoint union of two 3-cycles, and $B_2$ being a 6-cycle with labels for basic positions as below.

\begin{center}
\definecolor{uuuuuu}{rgb}{0.26666666666666666,0.26666666666666666,0.26666666666666666}
\begin{tikzpicture}[line cap=round,line join=round,>=triangle 45,x=1.0cm,y=1.0cm,scale=0.75]
\draw [line width=1.2pt] (-3.,1.)-- (-1.,1.);
\draw [line width=1.2pt] (-1.,1.)-- (-2.,2.7320508075688776);
\draw [line width=1.2pt] (-2.,2.7320508075688776)-- (-3.,1.);
\draw [line width=1.2pt] (0.,1.)-- (2.,1.);
\draw [line width=1.2pt] (2.,1.)-- (1.,2.7320508075688776);
\draw [line width=1.2pt] (1.,2.7320508075688776)-- (0.,1.);
\draw [line width=1.2pt] (6.,4.)-- (4.7,3.24);
\draw [line width=1.2pt] (4.7,3.24)-- (4.708179306876172,1.7341669750802309);
\draw [line width=1.2pt] (4.708179306876172,1.7341669750802309)-- (6.016358613752345,0.9883339501604596);
\draw [line width=1.2pt] (6.016358613752345,0.9883339501604596)-- (7.316358613752345,1.7483339501604582);
\draw [line width=1.2pt] (7.316358613752345,1.7483339501604582)-- (7.308179306876173,3.2541669750802282);
\draw [line width=1.2pt] (7.308179306876173,3.2541669750802282)-- (6.,4.);
\draw (-0.5,0) node {$B_1$};
\draw (6,0) node {$B_2$};
\begin{scriptsize}
\draw [fill=black] (-3.,1.) circle (2.5pt);
\draw[color=black] (-3.3,0.79) node {$a$};
\draw [fill=black] (-1.,1.) circle (2.5pt);
\draw[color=black] (-0.9,0.73) node {$b$};
\draw [fill=black] (0.,1.) circle (2.5pt);
\draw[color=black] (-0.24,0.75) node {$d$};
\draw [fill=black] (2.,1.) circle (2.5pt);
\draw[color=black] (2.14,0.77) node {$e$};
\draw [fill=black] (6.,4.) circle (2.5pt);
\draw[color=black] (6.,4.39) node {$a$};
\draw [fill=black] (4.7,3.24) circle (2.5pt);
\draw[color=black] (4.46,3.55) node {$f$};
\draw [fill=uuuuuu] (-2.,2.7320508075688776) circle (2.5pt);
\draw[color=uuuuuu] (-2.02,3.11) node {$c$};
\draw [fill=uuuuuu] (1.,2.7320508075688776) circle (2.5pt);
\draw[color=uuuuuu] (0.96,3.11) node {$f$};
\draw [fill=uuuuuu] (4.708179306876172,1.7341669750802309) circle (2.5pt);
\draw[color=uuuuuu] (4.54,1.45) node {$e$};
\draw [fill=uuuuuu] (6.016358613752345,0.9883339501604596) circle (2.5pt);
\draw[color=uuuuuu] (5.96,0.73) node {$d$};
\draw [fill=uuuuuu] (7.316358613752345,1.7483339501604582) circle (2.5pt);
\draw[color=uuuuuu] (7.54,1.57) node {$c$};
\draw [fill=uuuuuu] (7.308179306876173,3.2541669750802282) circle (2.5pt);
\draw[color=uuuuuu] (7.44,3.63) node {$b$};
\end{scriptsize}
\end{tikzpicture}
\end{center} 

The game trees for $(R,B_1)$ and $(R,B_2)$ (shown below on the top and bottom respectively) are isomorphic even though the legal complexes are not.
\begin{center}
\begin{tikzpicture}[line cap=round,line join=round,>=triangle 45,x=1.0cm,y=1.0cm,scale=0.5]
\draw [line width=1.2pt] (8.,5.)-- (7.,1.);
\draw [line width=1.2pt] (8.,5.)-- (5.,1.);
\draw [line width=1.2pt] (8.,5.)-- (3.,1.);
\draw [line width=1.2pt] (8.,5.)-- (1.,1.);
\draw [line width=1.2pt] (8.,5.)-- (-1.,1.);
\draw [line width=1.2pt] (8.,5.)-- (-3.,1.);
\draw [line width=1.2pt] (7.,1.)-- (6.,-1.);
\draw [line width=1.2pt] (7.,1.)-- (5.,-1.);
\draw [line width=1.2pt] (5.,1.)-- (4.,-1.);
\draw [line width=1.2pt] (5.,1.)-- (3.,-1.);
\draw [line width=1.2pt] (3.,1.)-- (2.,-1.);
\draw [line width=1.2pt] (3.,1.)-- (1.,-1.);
\draw [line width=1.2pt] (1.,1.)-- (0.,-1.);
\draw [line width=1.2pt] (1.,1.)-- (-1.,-1.);
\draw [line width=1.2pt] (-1.,1.)-- (-2.,-1.);
\draw [line width=1.2pt] (-1.,1.)-- (-3.,-1.);
\draw [line width=1.2pt] (-3.,1.)-- (-4.,-1.);
\draw [line width=1.2pt] (-3.,1.)-- (-5.,-1.);
\begin{scriptsize}
\draw [fill=black] (-3.,1.) circle (2.5pt);
\draw[color=black] (-3,1.5) node {$a$};
\draw [fill=black] (-1.,1.) circle (2.5pt);
\draw[color=black] (-1,1.5) node {$b$};
\draw [fill=black] (1.,1.) circle (2.5pt);
\draw[color=black] (1,1.5) node {$c$};
\draw [fill=black] (3.,1.) circle (2.5pt);
\draw[color=black] (3,1.5) node {$d$};
\draw [fill=black] (5.,1.) circle (2.5pt);
\draw[color=black] (5,1.5) node {$e$};
\draw [fill=black] (7.,1.) circle (2.5pt);
\draw[color=black] (6.84,1.5) node {$f$};
\draw [fill=black] (8.,5.) circle (2.5pt);
\draw [fill=black] (6.,-1.) circle (2.5pt);
\draw[color=black] (6,-1.35) node {$ef$};
\draw [fill=black] (5.,-1.) circle (2.5pt);
\draw[color=black] (5,-1.35) node {$df$};
\draw [fill=black] (4.,-1.) circle (2.5pt);
\draw[color=black] (4,-1.35) node {$ef$};
\draw [fill=black] (3.,-1.) circle (2.5pt);
\draw[color=black] (3,-1.35) node {$de$};
\draw [fill=black] (2.,-1.) circle (2.5pt);
\draw[color=black] (2,-1.35) node {$df$};
\draw [fill=black] (1.,-1.) circle (2.5pt);
\draw[color=black] (1,-1.35) node {$de$};
\draw [fill=black] (0.,-1.) circle (2.5pt);
\draw[color=black] (0,-1.35) node {$bc$};
\draw [fill=black] (-1.,-1.) circle (2.5pt);
\draw[color=black] (-1,-1.35) node {$ac$};
\draw [fill=black] (-2.,-1.) circle (2.5pt);
\draw[color=black] (-2,-1.35) node {$bc$};
\draw [fill=black] (-3.,-1.) circle (2.5pt);
\draw[color=black] (-3,-1.35) node {$ab$};
\draw [fill=black] (-4.,-1.) circle (2.5pt);
\draw[color=black] (-4,-1.35) node {$ac$};
\draw [fill=black] (-5.,-1.) circle (2.5pt);
\draw[color=black] (-5,-1.35) node {$ab$};
\end{scriptsize}
\end{tikzpicture}
\begin{tikzpicture}[line cap=round,line join=round,>=triangle 45,x=1.0cm,y=1.0cm,scale=0.5]
\draw [line width=1.2pt] (8.,5.)-- (7.,1.);
\draw [line width=1.2pt] (8.,5.)-- (5.,1.);
\draw [line width=1.2pt] (8.,5.)-- (3.,1.);
\draw [line width=1.2pt] (8.,5.)-- (1.,1.);
\draw [line width=1.2pt] (8.,5.)-- (-1.,1.);
\draw [line width=1.2pt] (8.,5.)-- (-3.,1.);
\draw [line width=1.2pt] (7.,1.)-- (6.,-1.);
\draw [line width=1.2pt] (7.,1.)-- (5.,-1.);
\draw [line width=1.2pt] (5.,1.)-- (4.,-1.);
\draw [line width=1.2pt] (5.,1.)-- (3.,-1.);
\draw [line width=1.2pt] (3.,1.)-- (2.,-1.);
\draw [line width=1.2pt] (3.,1.)-- (1.,-1.);
\draw [line width=1.2pt] (1.,1.)-- (0.,-1.);
\draw [line width=1.2pt] (1.,1.)-- (-1.,-1.);
\draw [line width=1.2pt] (-1.,1.)-- (-2.,-1.);
\draw [line width=1.2pt] (-1.,1.)-- (-3.,-1.);
\draw [line width=1.2pt] (-3.,1.)-- (-4.,-1.);
\draw [line width=1.2pt] (-3.,1.)-- (-5.,-1.);
\begin{scriptsize}
\draw [fill=black] (-3.,1.) circle (2.5pt);
\draw[color=black] (-3,1.5) node {$a$};
\draw [fill=black] (-1.,1.) circle (2.5pt);
\draw[color=black] (-1,1.5) node {$b$};
\draw [fill=black] (1.,1.) circle (2.5pt);
\draw[color=black] (1,1.5) node {$c$};
\draw [fill=black] (3.,1.) circle (2.5pt);
\draw[color=black] (3,1.5) node {$d$};
\draw [fill=black] (5.,1.) circle (2.5pt);
\draw[color=black] (5,1.5) node {$e$};
\draw [fill=black] (7.,1.) circle (2.5pt);
\draw[color=black] (6.84,1.5) node {$f$};
\draw [fill=black] (8.,5.) circle (2.5pt);
\draw [fill=black] (6.,-1.) circle (2.5pt);
\draw[color=black] (6,-1.35) node {$fa$};
\draw [fill=black] (5.,-1.) circle (2.5pt);
\draw[color=black] (5,-1.35) node {$ef$};
\draw [fill=black] (4.,-1.) circle (2.5pt);
\draw[color=black] (4,-1.35) node {$ef$};
\draw [fill=black] (3.,-1.) circle (2.5pt);
\draw[color=black] (3,-1.35) node {$de$};
\draw [fill=black] (2.,-1.) circle (2.5pt);
\draw[color=black] (2,-1.35) node {$de$};
\draw [fill=black] (1.,-1.) circle (2.5pt);
\draw[color=black] (1,-1.35) node {$cd$};
\draw [fill=black] (0.,-1.) circle (2.5pt);
\draw[color=black] (0,-1.35) node {$cd$};
\draw [fill=black] (-1.,-1.) circle (2.5pt);
\draw[color=black] (-1,-1.35) node {$bc$};
\draw [fill=black] (-2.,-1.) circle (2.5pt);
\draw[color=black] (-2,-1.35) node {$bc$};
\draw [fill=black] (-3.,-1.) circle (2.5pt);
\draw[color=black] (-3,-1.35) node {$ab$};
\draw [fill=black] (-4.,-1.) circle (2.5pt);
\draw[color=black] (-4,-1.35) node {$af$};
\draw [fill=black] (-5.,-1.) circle (2.5pt);
\draw[color=black] (-5,-1.35) node {$ab$};
\end{scriptsize}
\end{tikzpicture}
\end{center}
\end{example}

This also gives an indication that the legal complex of an SP-game is a better representative for the SP-game than the tree as it conveys more structure.

On the other hand, if the illegal complexes are isomorphic, it is not always true that the game trees are isomorphic. For example, consider the ruleset $R$ in which neither player can place on a vertex of degree 1. We then have \[\Gamma_{R,P_2}=\langle x_1,x_2,y_1,y_2\rangle\cong\Gamma_{R,P_3}=\langle x_1, x_3, y_1, y_3\rangle.\] The legal complexes $\Delta_{R,P_2}=\emptyset$ and $\Delta_{R, P_3}=\langle x_2, y_2\rangle$ are not isomorphic. And since there are no legal moves in $(R, P_2)$, but there are in $(R,P_3)$, their game trees are not isomorphic either. Another occurrence of this is if there are moves that are always playable in one game, but these moves do not occur at all in the second game. 

Finally, we are able to characterize what the legal complex of the disjunctive sum of two games looks like. Given two simplicial complexes $\Delta$ and $\Delta'$ their \textbf{join} $\Delta *\Delta'$ is the simplicial complex whose facets are all the unions of a facet of $\Delta$ with a facet of $\Delta'$.

\begin{proposition}\label{thm:disjunctiveStructure}
Let $(R,B)$ and $(R',B')$ be two SP-games with legal complexes $\Delta_{R,B}$ and $\Delta_{R',B'}$. Then
\[\Delta_{(R,B)+(R',B')}=\Delta_{R,B} *\Delta_{R',B'}\]
is the legal complex of the disjunctive sum $(R,B)+(R',B')$.
\end{proposition}
\begin{proof}
A maximal legal position in the game $(R,B)+(R',B')$ is one where both the pieces placed in $(R,B)$ and the ones placed in $(R',B')$ form maximal legal positions. Thus a facet in the legal complex of $(R,B)+(R',B')$ is a union of a facet of $\Delta_{R,B}$ and a facet of $\Delta_{R',B'}$.
\end{proof}

\vspace{1em}

A natural and important question is whether any given simplicial complex $\Delta$ is the legal or illegal complex of some game. We will answer this question positively in both cases. This will allow us to view properties of games as properties of simplicial complexes and vice-versa. We are able to show this for any bipartition of the vertices into Left $\L$ and Right $\R$, where $\L$ or $\R$ could even be the empty set.

\begin{proposition}[\textbf{Games from Simplicial Complexes}]\label{thm:legalillegal}
Given an $(\L,\R)$-labelled simplicial complex $\Delta$, there exist two SP-games $(R_1,B)$ and $(R_2,B)$ such that
\begin{itemize}
\item[(a)] $\Delta=\Delta_{R_1,B}$ and
\item[(b)] $\Delta=\Gamma_{R_2,B}$
\end{itemize}
and the sets of Left (respectively Right) positions is $\L$ (respectively $\R$).
\end{proposition}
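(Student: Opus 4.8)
The plan is to construct, from the data of an $(\L,\R)$-labelled simplicial complex $\Delta$, an explicit board $B$ and explicit rulesets realizing $\Delta$ as a legal complex in part (a) and as an illegal complex in part (b). The natural choice of board is a graph whose vertices are indexed by the vertices of $\Delta$ (i.e.\ one board-vertex per basic position), so that basic positions are simply ``place a Left token on board-vertex $v$'' for $v\in\L$ and ``place a Right token on board-vertex $v$'' for $v\in\R$. Since condition (iv) of \cref{def:placementgame} forces legal positions to form a simplicial complex (downward closed under removing pieces), and we are handed an arbitrary simplicial complex, the ruleset must be allowed to be quite permissive about reading off the whole current position. I would simply declare: a move placing a token at $v$ (of the appropriate colour) onto a current position corresponding to a face $F\in\Delta$ is legal if and only if $F\cup\{v\}\in\Delta$. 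One must check this satisfies (i)--(iv): (i)--(iii) are immediate from the setup, and (iv) holds because whether a set of placed pieces is legal depends only on the set, not the order — legality of the final configuration $F$ is equivalent to $F\in\Delta$, and $\Delta$ being a simplicial complex means every intermediate set is also in $\Delta$, so every order of the same moves is legal. Then by construction the legal positions are exactly the faces of $\Delta$, the maximal legal positions are the facets, so $\legalI{G_1,B}=\facetI{}$ and $\Delta_{G_1,B}=\Delta$, with the Left/Right partition as prescribed. This gives (a).

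For (b) I would use the relation recorded in the remark after \cref{def:gameComplexIdeal}, namely $\Gamma_{G_2,B}=\facet{\illegalI{G_2,B}}$ and $\Delta_{G_2,B}=\SR{\illegalI{G_2,B}}$, i.e.\ the illegal complex of a game is the facet complex of its Stanley--Reisner ideal, equivalently the minimal illegal positions are the minimal non-faces of the legal complex. So to make $\Delta$ the illegal complex, I want a game whose legal complex $\Delta'$ has $\Delta$ as its facet complex of minimal non-faces: concretely, I want the minimal non-faces of $\Delta'$ to be exactly the facets of $\Delta$. One clean way: take the ground ring in the same variables (one per vertex of $\Delta$), let $I=\SR{}^{-1}$ in the sense that we treat the facet ideal $\facetI{}$ as a Stanley--Reisner ideal — that is, set $J := \facetI{}$ viewed as the Stanley--Reisner ideal of a new complex $\Delta' := \SR{J}$, whose faces are the subsets of vertices containing no facet of $\Delta$. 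Then define $G_2$ on $B$ by the ruleset: placing a token of colour matching $v$ extending a position $F$ is legal iff $F\cup\{v\}\in\Delta'$, i.e.\ iff $F\cup\{v\}$ contains no facet of $\Delta$. Again (i)--(iv) hold for the same reason as above (legality depends only on the underlying set, and $\Delta'$ is a simplicial complex). The legal complex of $(G_2,B)$ is then $\Delta'=\SR{\facetI{}}$, hence $\illegalI{G_2,B}=\SRideal{'}=\facetI{}$ and $\Gamma_{G_2,B}=\facet{\facetI{}}=\Delta$, again with the correct partition. The edge cases where $\L$ or $\R$ is empty need nothing special: the corresponding block of variables is just absent, and everything goes through.

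There is one subtlety worth flagging and handling carefully, and it is the step I expect to be the real (if modest) obstacle: one must be sure the proposed ``ruleset'' genuinely is a ruleset in the sense allowed by the paper — that is, a move-legality condition depending on the board and current configuration — rather than something that smuggles in knowledge of $\Delta$ in a way that violates the spirit of placement games. Here this is fine precisely because the paper's \cref{def:placementgame} imposes no restriction on how the rules may read the current position, only conditions (i)--(iv); the price, acknowledged in the introduction, is that such rulesets ``can be quite complex and depend highly on the board'' — which is exactly why the later notion of invariance is introduced. So the proof of \cref{thm:legalillegal} does not need to produce a natural or board-independent ruleset, only some ruleset; that keeps this proposition elementary. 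A second point to verify: the minimal illegal positions really are the minimal non-faces, i.e.\ that a position is illegal and minimally so iff the corresponding monomial is a minimal generator of the Stanley--Reisner ideal of the legal complex — but this is exactly the content of the remark after \cref{def:gameComplexIdeal} together with the definition of $\SRideal{}$ in \cref{def:SRFideal}, so it can be cited. Finally I would restate the two constructions explicitly (board $= $ the graph on vertex set $V(\Delta)$ with, say, no edges or with edges encoding nothing essential, plus the two ruleset clauses) so that the reader can check (i)--(iv) line by line, and conclude.
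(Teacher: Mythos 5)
Your construction is in essence the paper's: both proofs take one board component per vertex of $\Delta$ and declare a configuration legal exactly when the corresponding vertex set is a face of $\Delta$ (for $G_1$), respectively contains no facet of $\Delta$ (for $G_2$), relying on downward-closedness of these conditions to get property (iv) and reading off the legal/illegal complexes directly. Your detour through $\Delta'=\SR{\facetI{}}$ in part (b) is just a reformulation of the observation that the minimal sets containing a facet of $\Delta$ are precisely the facets of $\Delta$.

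The one concrete point where your version is weaker than the paper's concerns how Left's and Right's basic positions are kept apart. You put a single board vertex at each vertex of $\Delta$ and stipulate by fiat that Left tokens go only on $\L$-vertices and Right tokens only on $\R$-vertices. Under the paper's conventions this is not automatic: a physically placeable but rule-forbidden placement still counts as an (illegal) basic position and contributes a variable to the underlying ring --- see the \domineering example, where $x_3,y_1,y_2$ are illegal basic positions and appear as facets of $\Gamma_{\Domineering,B}$. If both players' pieces are indistinguishable single tokens, then ``Left plays on $v\in\R$'' is such a basic position, and in part (b) each of these would show up as an extra one-element facet of $\Gamma_{G_2,B}$, so you would get $\Delta$ plus spurious isolated vertices rather than $\Delta$. (Part (a) survives, since the legal complex only sees vertices occurring in maximal legal positions.) The paper sidesteps this by making the pieces geometrically different: the board is $|\L|$ disjoint $3$-cycles and $|\R|$ disjoint $4$-cycles, Left plays $3$-cycles and Right plays $4$-cycles, so each player's pieces simply do not fit on the other player's components and the basic positions are exactly the vertices of $\Delta$ with the prescribed bipartition. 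Your proof is repaired by adopting that (or any similar) device; as written, the separation of $\L$ from $\R$ is asserted rather than achieved by the construction.
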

\begin{proof}
Let $m=|\L|$ and $n=|\R|$. Let $B$ be the board consisting of $m$ disjoint $3$-cycles and $n$ disjoint $4$-cycles. In the games $(R_1,B)$ and $(R_2,B)$, Left will be playing $3$-cycles, while Right will be playing $4$-cycles.

In $\Delta$, label the vertices belonging to $\L$ as $1, \ldots, m$, and the vertices in $\R$ as $m+1, \ldots, n+m$. Similarly, label the $3$-cycles of $B$ as $1, \ldots, m$, and the $4$-cycles as $m+1, \ldots, n+m$. 

(a) In $R_1$, playing on a set of cycles of $B$ is legal if and only if the corresponding set of vertices in $\Delta$ forms a face.

(b) In $R_2$, playing on a set of cycles of $B$ is legal if and only if the corresponding set of vertices in $\Delta$ does not contain a facet.

It is now easy to see that $\Delta=\Delta_{R_1,B}$ and $\Delta=\Gamma_{R_2,B}$.
\end{proof}

As seen above, it is rather simple to construct games on fixed boards from simplicial complexes by restricting the legal moves to certain parts of the board. We now move on to look at games where such restrictions can be relaxed. We call these invariant games.

\section{Invariant Games}\label{sec:iSP}

As we have shown in the previous section, every $(\L,\R)$-labelled simplicial complex is the legal or illegal complex of some SP-game and board. The rules created as part of this construction, however, depend heavily on the board. We now define the concept of invariance for SP-games, which in a sense forces the ruleset to be ``uniform'' across the board.

\begin{definition}
The ruleset of an SP-game is \textbf{invariant} if the following conditions hold:
\begin{itemize}
\item Every basic position is legal.
\item The ruleset does not depend on the board, \ie if $B_1$ and $B_2$ are isomorphic boards, then a move in $B_1$ is legal if and only if its isomorphic image in $B_2$ is legal.
\end{itemize}
\end{definition}

Note that in particular the second condition has to hold for \textit{any} board isomorphism between $B_1$ and $B_2$. If the ruleset of an SP-game is invariant, we also say that the game is an \textbf{invariant strong placement game (iSP-game)}.

\col and \snort are examples of rulesets that are invariant, while \nogo is not. 
That \nogo is not invariant is not immediately obvious. Indeed on most boards both conditions hold, but whenever the board has an isolated vertex, playing on it is illegal (thus the basic position corresponding to that vertex is illegal).

An example of an SP-game which fails the second condition is the following.
\begin{example}
Consider playing on the boards $B_1$ and $B_2$, both 4-cycles with labels as below, a game in which a Left piece on vertex 1 cannot be adjacent to another piece. 
\begin{center}
\begin{tikzpicture}[line cap=round,line join=round,>=triangle 45,x=1.0cm,y=1.0cm,scale=0.75]
\draw [line width=1.2pt] (0.,0.)-- (0.,3.)-- (3.,3.)-- (3.,0.)--cycle;
\draw [line width=1.2pt] (5.,3.)-- (5.,0.)-- (8.,0.)-- (8.,3.)--cycle;
\draw (1.5,-0.75) node {$B_1$};
\draw (6.5,-0.75) node {$B_2$};
\begin{scriptsize}
\draw [fill=black] (0.,0.) circle (2.5pt);
\draw[color=black] (-0.25,-0.25) node {$1$};
\draw [fill=black] (0.,3.) circle (2.5pt);
\draw[color=black] (-0.25,3.25) node {$2$};
\draw [fill=black] (3.,3.) circle (2.5pt);
\draw[color=black] (3.25,3.25) node {$3$};
\draw [fill=black] (3.,0.) circle (2.5pt);
\draw[color=black] (3.25,-0.25) node {$4$};
\draw [fill=black] (5.,3.) circle (2.5pt);
\draw[color=black] (4.75,3.25) node {$1$};
\draw [fill=black] (8.,3.) circle (2.5pt);
\draw[color=black] (8.25,3.25) node {$2$};
\draw [fill=black] (8.,0.) circle (2.5pt);
\draw[color=black] (8.25,-0.25) node {$3$};
\draw [fill=black] (5.,0.) circle (2.5pt);
\draw[color=black] (4.75,-0.25) node {$4$};
\end{scriptsize}
\end{tikzpicture}
\end{center}

Now $B_1$ and $B_2$ are isomorphic graphs and, since neither contains pieces, also isomorphic boards. The position in which there is a Left piece in the top left corner and a Right piece in the top right corner is legal on $B_1$ but not on $B_2$. Thus this game is not invariant.
\end{example}

Similar to the question of the previous section, we are interested in which simplicial complexes appear as the legal or illegal complex of an iSP-game.

We will show below that the illegal complex of an iSP-game cannot contain an isolated vertex.

\begin{proposition}\label{thm:illegalinvariantnovertices}
Let $\Gamma$ be a simplicial complex. If $\Gamma$ is the illegal complex of some iSP-game then $\Gamma$ has no facets that are one-element sets.
\end{proposition}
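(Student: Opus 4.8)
The plan is to argue by contradiction: suppose $\Gamma$ is the illegal complex of an iSP-game $G$ played on some board $B$, and suppose $\Gamma$ has a facet $\{v\}$ consisting of a single vertex. By the remark after \cref{def:gameComplexIdeal}, the facets of $\Gamma_{G,B}$ are exactly the minimal illegal positions, so $\{v\}$ being a facet means that the basic position corresponding to $v$ is (minimally) illegal. But this directly contradicts the first defining condition of an invariant ruleset, namely that \emph{every basic position is legal}. That is essentially the whole argument; the only thing to be careful about is the bookkeeping connecting the combinatorial-game language (``basic position'') to the simplicial-complex language (``one-element facet of $\Gamma_{G,B}$'').

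So the key steps, in order, are as follows. First I would recall that a vertex of $\Gamma_{G,B}$ corresponds to a basic position of $(G,B)$ (either a single Left piece or a single Right piece placed on the board), and that $\Gamma_{G,B} = \facet{\illegalI{G,B}}$, so its facets correspond precisely to the monomials generating the illegal ideal, i.e.\ to minimal illegal positions. Second, I would observe that if $\{v\}$ is a facet of $\Gamma$, then the monomial associated to $v$ (a single variable $x_i$ or $y_j$) is a minimal generator of $\illegalI{G,B}$, so the basic position encoded by $v$ is an illegal position of $(G,B)$. Third, I would invoke invariance: since $G$ is an iSP-game, every basic position is legal by definition, contradicting the previous step. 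Hence $\Gamma$ can have no one-element facet.

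I do not expect any real obstacle here; this proposition is the ``easy direction'' recording an obstruction, and its content is almost immediate once the dictionary between the two languages is in place. The one subtlety worth a sentence in the write-up is to make explicit that a one-element \emph{facet} (as opposed to a one-element face that is contained in a larger facet) is what forces the basic position itself to be minimally illegal: an isolated-vertex face that happens to sit inside a bigger facet would not cause a problem, which is exactly why the statement is phrased in terms of facets rather than vertices. The converse-type statements (that every complex \emph{without} such a facet does arise, from any bipartition) are handled later in \cref{lem:illegal} and \cref{thm:Gammagameandgraph}, so here it suffices to establish the necessity of this condition.
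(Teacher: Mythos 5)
Your argument is exactly the paper's proof: a one-element facet of $\Gamma_{G,B}$ corresponds to a minimal illegal basic position, which contradicts the first condition of invariance. The extra bookkeeping you spell out (facets versus faces, the dictionary via $\facet{\illegalI{G,B}}$) is correct and matches the paper's reasoning, so there is nothing to add.
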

\begin{proof}
Assume that $\Gamma$ has a facet that is a one-element set, \ie an isolated vertex, and label this vertex $a$. If $\Gamma$ is the illegal complex of some SP-game $(R,B)$, then since $\{a\}$ is a facet of $\Gamma$, there exists a basic position (corresponding to the vertex $a$) which is illegal. Thus $G$ does not satisfy the first condition of invariance.
\end{proof}

Other than the isolated vertex situation, there is no obstruction for a simplicial complex $\Gamma$ being an illegal complex. We set out to prove this by constructing a $\Gamma$-board and a $\Gamma$-ruleset.

\begin{construction}[\textbf{$\mathbf{\Gamma}$-board}]
Given an $(\L,\R)$-labelled simplicial complex $\Gamma$ with no isolated vertices we can construct a graph $B_\Gamma$ (called the \textbf{$\Gamma$-board}) as follows:

If $\Gamma$ is empty, then let $B_\Gamma$ be empty.

If $\Gamma$ is non-empty, then let $H=\Gamma^{[1]}$, \ie the underlying graph of $\Gamma$. Let $n$ be the number of vertices in the graph $H$ and (re)label the vertices of $H$ as $1,\ldots, n$. Begin constructing the board $B_\Gamma$ by using $n$ cycles of sizes $n^4+4$ and $n^4+5$ and label these $1,\ldots, n$ so that cycle $i$ will have size $n^4+4$ if the vertex $i$ in $H$ belongs to $\L$, and size $n^4+5$ if the vertex $i$ belongs to $\R$. For each cycle, designate $n-1$ consecutive vertices for joining, called \textbf{connection vertices} (see \cref{fig:cyclei}).

\begin{figure}[!ht]
\begin{center}
\definecolor{qqqqff}{rgb}{0.,0.,1.}
\definecolor{qqzzqq}{rgb}{0.,0.6,0.}
\definecolor{ffqqqq}{rgb}{1.,0.,0.}
\begin{tikzpicture}[line cap=round,line join=round,>=triangle 45,x=1.0cm,y=1.0cm,scale=0.75]
\clip(-4,-7) rectangle (14,6);
\draw [line width=0.5pt] (5.,0.) circle (5.cm);
\draw [line width=1.2pt,dash pattern=on 2pt off 2pt,color=qqqqff] (0.84,-1.72) circle (0.4984540207992376cm);
\draw [line width=1.2pt,dash pattern=on 2pt off 2pt,color=qqqqff] (3.28,-4.16) circle (0.4984540207992371cm);
\draw [line width=1.2pt,dash pattern=on 2pt off 2pt,color=qqqqff] (5.,-4.5) circle (0.5cm);
\draw [line width=1.2pt,dash pattern=on 2pt off 2pt,color=qqqqff] (6.72,-4.16) circle (0.4984540207992374cm);
\draw [line width=1.2pt,dash pattern=on 2pt off 2pt,color=qqqqff] (8.19,-3.17) circle (0.5029997182261414cm);
\draw [line width=1.2pt,dash pattern=on 2pt off 2pt,color=qqqqff] (9.5,0.) circle (0.5cm);
\draw [color=qqzzqq](10,5.4622222222222225) node {$n^4-n+5$ or $n^4-n+6$};
\draw [color=qqzzqq](10,4.8) node {outer vertices};
\draw [color=qqqqff](5,-2) node {inner cycles ($n^3$ vertices)};
\draw [color=ffqqqq](6.06,-6.1377777777777816) node {$n-1$ connection vertices};
\draw (5,0) node {\Large $i$};
\begin{scriptsize}
\draw [fill=ffqqqq] (10.,0.) circle (2.5pt);
\draw[color=ffqqqq] (10.48,0) node {$i,n$};
\draw [fill=qqzzqq] (5.,5.) ++(-2.5pt,0 pt) -- ++(2.5pt,2.5pt)--++(2.5pt,-2.5pt)--++(-2.5pt,-2.5pt)--++(-2.5pt,2.5pt);
\draw [fill=qqzzqq] (0.,0.) ++(-2.5pt,0 pt) -- ++(2.5pt,2.5pt)--++(2.5pt,-2.5pt)--++(-2.5pt,-2.5pt)--++(-2.5pt,2.5pt);
\draw [fill=ffqqqq] (5.,-5.) circle (2.5pt);
\draw[color=ffqqqq] (5.16,-5.407777777777781) node {$i,i-1$};
\draw [fill=qqzzqq] (8.535533905932738,3.5355339059327373) ++(-2.5pt,0 pt) -- ++(2.5pt,2.5pt)--++(2.5pt,-2.5pt)--++(-2.5pt,-2.5pt)--++(-2.5pt,2.5pt);
\draw [fill=qqzzqq] (9.619397662556434,1.913417161825449) ++(-2.5pt,0 pt) -- ++(2.5pt,2.5pt)--++(2.5pt,-2.5pt)--++(-2.5pt,-2.5pt)--++(-2.5pt,2.5pt);
\draw [fill=qqzzqq] (6.913417161825449,4.619397662556434) ++(-2.5pt,0 pt) -- ++(2.5pt,2.5pt)--++(2.5pt,-2.5pt)--++(-2.5pt,-2.5pt)--++(-2.5pt,2.5pt);
\draw [fill=qqzzqq] (3.0865828381745515,4.619397662556434) ++(-2.5pt,0 pt) -- ++(2.5pt,2.5pt)--++(2.5pt,-2.5pt)--++(-2.5pt,-2.5pt)--++(-2.5pt,2.5pt);
\draw [fill=qqzzqq] (1.4644660940672627,3.5355339059327378) ++(-2.5pt,0 pt) -- ++(2.5pt,2.5pt)--++(2.5pt,-2.5pt)--++(-2.5pt,-2.5pt)--++(-2.5pt,2.5pt);
\draw [fill=qqzzqq] (0.3806023374435661,1.9134171618254494) ++(-2.5pt,0 pt) -- ++(2.5pt,2.5pt)--++(2.5pt,-2.5pt)--++(-2.5pt,-2.5pt)--++(-2.5pt,2.5pt);
\draw [fill=ffqqqq] (0.3806023374435661,-1.9134171618254483) circle (2.5pt);
\draw[color=ffqqqq] (-0.12,-2) node {$i,1$};
\draw [fill=ffqqqq] (1.4644660940672614,-3.5355339059327373) circle (2.5pt);
\draw [fill=ffqqqq] (3.0865828381745524,-4.619397662556434) circle (2.5pt);
\draw[color=ffqqqq] (2.8,-5) node {$i,i-2$};
\draw [fill=ffqqqq] (6.91341716182545,-4.619397662556433) circle (2.5pt);
\draw[color=ffqqqq] (7.3,-4.987777777777781) node {$i,i+1$};
\draw [fill=ffqqqq] (9.619397662556434,-1.913417161825449) circle (2.5pt);
\draw [fill=ffqqqq] (8.535533905932738,-3.5355339059327373) circle (2.5pt);
\draw[color=ffqqqq] (9.4,-3.667777777777781) node {$i,i+2$};
\draw [fill=black] (9.158315414732623,-1.7200037533671415) circle (1.5pt);
\draw [fill=black] (9.292399973803288,-1.3510375512521906) circle (1.5pt);
\draw [fill=black] (8.992583564496345,-2.0758796883523063) circle (1.5pt);
\draw [fill=black] (1.819605320654964,-3.183565561378278) circle (1.5pt);
\draw [fill=black] (1.5542416615094896,-2.894261472763495) circle (1.5pt);
\draw [fill=black] (2.1091737036850677,-3.448640793781528) circle (1.5pt);
\end{scriptsize}
\end{tikzpicture}
\end{center}
\caption{Cycle $i$ in the Board $B_\Gamma$}
\label{fig:cyclei}
\end{figure}
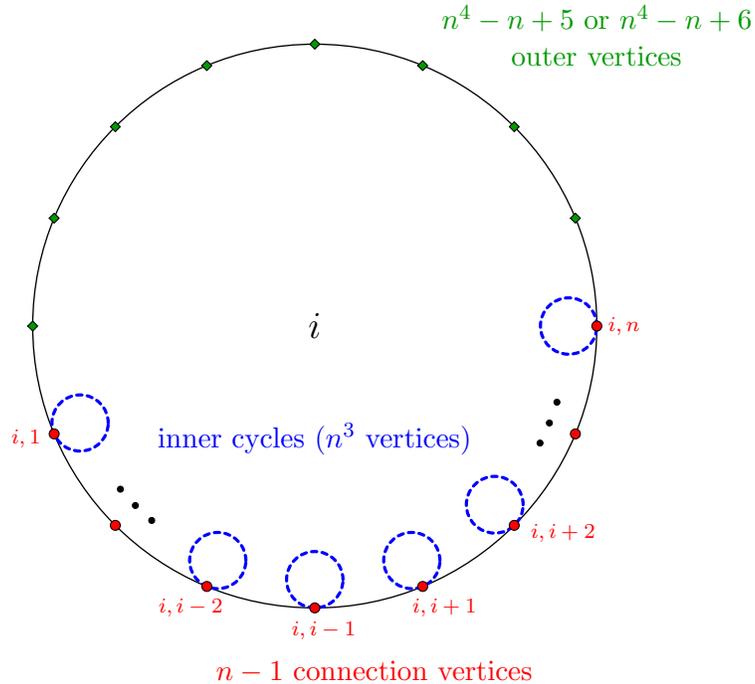

Call the remaining vertices \textbf{outer vertices}. To each connection vertex, join a cycle of length $n^3$ (called \textbf{inner cycles}). In cycle $i$ label the connection vertices as $i,j$ where $j=1,\ldots, n$ and $j\neq i$.

Label the edges in $H$ as $1,\ldots, k$. If the endpoints of the edge $l$ are the vertices $i$ and $j$, then add a path of $2+l$ vertices to $B_\Gamma$, whose end vertices are $i,j$ and $j,i$ (see \cref{fig:edgetopath}). The $l$ vertices between $i,j$ and $j,i$ are called \textbf{centre vertices}.

\begin{figure}[!ht]
\begin{center}
\definecolor{qqqqqq}{rgb}{0.,0.,0.}
\definecolor{qqqqff}{rgb}{0.,0.,1.}
\definecolor{ffqqqq}{rgb}{1.,0.,0.}
\begin{tikzpicture}[line cap=round,line join=round,>=triangle 45,x=1.0cm,y=1.0cm,scale=0.5]
\draw [line width=1pt] (5.,0.) circle (3.cm);
\draw [line width=1pt,dash pattern=on 2pt off 2pt] (5.,-2.5) circle (0.5cm);
\draw [line width=1pt,dash pattern=on 2pt off 2pt] (6.82,-1.7477777777777739) circle (0.4799249919454108cm);
\draw [line width=1pt,dash pattern=on 2pt off 2pt] (7.5,0.) circle (0.5cm);
\draw (5,0) node {$i$};
\draw [line width=1pt] (13.,-8.) circle (3.cm);
\draw (13,-8) node {$j$};
\draw [line width=1pt,dash pattern=on 2pt off 2pt] (13.,-5.5) circle (0.5cm);
\draw [line width=1pt,dash pattern=on 2pt off 2pt] (11.26,-6.2) circle (0.4986501454905335cm);
\draw [line width=1pt,dash pattern=on 2pt off 2pt] (10.5,-8.) circle (0.5cm);
\draw [line width=1pt,color=qqqqff] (7.121320343559643,-2.1213203435596424)-- (10.878679656440358,-5.878679656440357);
\draw [line width=0.8pt,color=qqqqqq] (5.02,-3.0477777777777724)-- (4.94,-3.0877777777777724)-- (4.88,-3.1077777777777724)-- (4.82,-3.1477777777777725)-- (4.76,-3.227777777777772)-- (4.7,-3.247777777777772)-- (4.64,-3.267777777777772)-- (4.56,-3.267777777777772)-- (4.52,-3.327777777777772)-- (4.48,-3.407777777777772)-- (4.44,-3.467777777777772)-- (4.44,-3.547777777777772)-- (4.42,-3.6077777777777715)-- (4.42,-3.6877777777777716)-- (4.42,-3.7877777777777717)-- (4.42,-3.8677777777777713)-- (4.42,-3.9477777777777714)-- (4.42,-4.0277777777777715)-- (4.44,-4.127777777777771)-- (4.52,-4.127777777777771)-- (4.6,-4.127777777777771)-- (4.68,-4.127777777777771)-- (4.76,-4.127777777777771)-- (4.88,-4.127777777777771)-- (4.96,-4.127777777777771)-- (5.02,-4.147777777777771)-- (5.04,-4.207777777777771)-- (5.08,-4.267777777777771)-- (5.12,-4.327777777777771)-- (5.18,-4.36777777777777)-- (5.22,-4.427777777777771)-- (5.24,-4.4877777777777705)-- (5.26,-4.54777777777777)-- (5.28,-4.607777777777771)-- (5.36,-4.607777777777771)-- (5.44,-4.607777777777771)-- (5.54,-4.607777777777771)-- (5.62,-4.607777777777771)-- (5.7,-4.58777777777777)-- (5.76,-4.567777777777771)-- (5.86,-4.567777777777771)-- (5.94,-4.54777777777777)-- (6.02,-4.527777777777771)-- (6.08,-4.4877777777777705)-- (6.12,-4.427777777777771)-- (6.18,-4.387777777777771)-- (6.26,-4.387777777777771)-- (6.34,-4.387777777777771)-- (6.42,-4.387777777777771)-- (6.5,-4.387777777777771)-- (6.58,-4.387777777777771)-- (6.66,-4.387777777777771)-- (6.74,-4.387777777777771)-- (6.82,-4.387777777777771)-- (6.9,-4.387777777777771)-- (6.98,-4.327777777777771)-- (7.02,-4.267777777777771)-- (7.08,-4.127777777777771)-- (7.12,-3.9877777777777714)-- (7.14,-3.8677777777777713)-- (7.14,-3.7477777777777717)-- (7.14,-3.6677777777777716)-- (7.1,-3.6077777777777715)-- (7.02,-3.6077777777777715)-- (6.94,-3.5877777777777715)-- (6.86,-3.5877777777777715)-- (6.78,-3.5877777777777715)-- (6.7,-3.5877777777777715)-- (6.62,-3.5877777777777715)-- (6.54,-3.5877777777777715)-- (6.46,-3.5877777777777715)-- (6.38,-3.5877777777777715)-- (6.3,-3.6077777777777715)-- (6.22,-3.6077777777777715)-- (6.18,-3.547777777777772)-- (6.18,-3.467777777777772)-- (6.18,-3.387777777777772)-- (6.22,-3.287777777777772)-- (6.24,-3.227777777777772)-- (6.28,-3.167777777777772)-- (6.32,-3.1077777777777724)-- (6.34,-2.9877777777777723)-- (6.34,-2.9077777777777727)-- (6.34,-2.8277777777777726)-- (6.28,-2.8077777777777726)-- (6.2,-2.8077777777777726)-- (6.12,-2.8077777777777726);
\draw [line width=0.8pt,color=qqqqqq] (7.82,-1.1477777777777747)-- (7.84,-1.2077777777777745)-- (7.92,-1.3077777777777744)-- (7.96,-1.3677777777777744)-- (8.04,-1.4277777777777743)-- (8.12,-1.4677777777777743)-- (8.28,-1.507777777777774)-- (8.46,-1.507777777777774)-- (8.58,-1.507777777777774)-- (8.68,-1.507777777777774)-- (8.76,-1.4877777777777743)-- (8.84,-1.4677777777777743)-- (8.9,-1.4077777777777742)-- (8.96,-1.3677777777777744)-- (9.02,-1.3277777777777744)-- (9.06,-1.2677777777777746)-- (9.12,-1.2277777777777745)-- (9.18,-1.2077777777777745)-- (9.22,-1.1477777777777747)-- (9.3,-1.1477777777777747)-- (9.38,-1.1477777777777747)-- (9.42,-1.2077777777777745)-- (9.46,-1.2677777777777746)-- (9.54,-1.2877777777777746)-- (9.62,-1.2877777777777746)-- (9.7,-1.2677777777777746)-- (9.74,-1.2077777777777745)-- (9.82,-1.1277777777777747)-- (9.82,-1.0477777777777748)-- (9.84,-0.9877777777777749)-- (9.84,-0.9077777777777749)-- (9.84,-0.8277777777777751)-- (9.84,-0.7477777777777751)-- (9.84,-0.6677777777777752)-- (9.78,-0.6077777777777753)-- (9.7,-0.5477777777777754)-- (9.66,-0.4877777777777754)-- (9.58,-0.46777777777777546)-- (9.5,-0.46777777777777546)-- (9.42,-0.46777777777777546)-- (9.32,-0.46777777777777546)-- (9.26,-0.4477777777777755)-- (9.2,-0.4277777777777755)-- (9.14,-0.4077777777777755)-- (9.1,-0.26777777777777567)-- (9.1,-0.1877777777777758)-- (9.1,-0.0877777777777759)-- (9.1,0.012222222222223977)-- (9.1,0.09222222222222388)-- (9.1,0.17222222222222378)-- (9.1,0.27222222222222364)-- (9.1,0.35222222222222355)-- (9.04,0.37222222222222356)-- (8.96,0.37222222222222356)-- (8.88,0.37222222222222356)-- (8.8,0.37222222222222356)-- (8.72,0.37222222222222356)-- (8.64,0.35222222222222355)-- (8.58,0.3122222222222236)-- (8.52,0.27222222222222364)-- (8.46,0.2522222222222237)-- (8.4,0.21222222222222373)-- (8.34,0.19222222222222377)-- (8.28,0.13222222222222382)-- (8.2,0.13222222222222382)-- (8.14,0.09222222222222388)-- (8.1,0.03222222222222395)-- (8.1,-0.047777777777775955)-- (8.06,-0.10777777777777588);
\draw [line width=0.8pt,color=qqqqqq] (10.22,-6.827777777777768)-- (10.2,-6.727777777777768)-- (10.14,-6.587777777777768)-- (10.1,-6.487777777777768)-- (10.04,-6.407777777777769)-- (10.,-6.347777777777768)-- (9.94,-6.287777777777769)-- (9.9,-6.227777777777769)-- (9.84,-6.187777777777769)-- (9.78,-6.1677777777777685)-- (9.72,-6.147777777777769)-- (9.66,-6.127777777777768)-- (9.58,-6.127777777777768)-- (9.5,-6.127777777777768)-- (9.42,-6.127777777777768)-- (9.34,-6.127777777777768)-- (9.28,-6.147777777777769)-- (9.22,-6.1677777777777685)-- (9.14,-6.1677777777777685)-- (9.06,-6.1677777777777685)-- (9.,-6.147777777777769)-- (8.92,-6.107777777777769)-- (8.86,-6.027777777777769)-- (8.76,-5.967777777777769)-- (8.66,-5.927777777777769)-- (8.58,-5.847777777777769)-- (8.5,-5.807777777777769)-- (8.42,-5.767777777777769)-- (8.3,-5.747777777777769)-- (8.2,-5.747777777777769)-- (8.12,-5.747777777777769)-- (8.,-5.747777777777769)-- (7.94,-5.807777777777769)-- (7.8,-5.947777777777769)-- (7.74,-6.027777777777769)-- (7.72,-6.107777777777769)-- (7.62,-6.347777777777768)-- (7.6,-6.567777777777768)-- (7.6,-6.707777777777768)-- (7.6,-6.847777777777768)-- (7.6,-6.927777777777767)-- (7.6,-7.0077777777777674)-- (7.66,-7.0477777777777675)-- (7.76,-7.1277777777777676)-- (7.94,-7.207777777777768)-- (8.2,-7.267777777777767)-- (8.5,-7.367777777777767)-- (8.6,-7.387777777777767)-- (8.62,-7.447777777777767)-- (8.62,-7.547777777777767)-- (8.62,-7.787777777777767)-- (8.62,-7.8877777777777665)-- (8.62,-8.027777777777766)-- (8.6,-8.187777777777766)-- (8.6,-8.427777777777766)-- (8.58,-8.687777777777766)-- (8.54,-8.887777777777766)-- (8.48,-9.007777777777765)-- (8.42,-9.107777777777764)-- (8.42,-9.187777777777764)-- (8.42,-9.267777777777765)-- (8.46,-9.327777777777765)-- (8.56,-9.327777777777765)-- (8.74,-9.327777777777765)-- (8.86,-9.327777777777765)-- (8.96,-9.327777777777765)-- (9.04,-9.307777777777765)-- (9.12,-9.267777777777765)-- (9.2,-9.207777777777766)-- (9.22,-9.147777777777765)-- (9.24,-9.087777777777765)-- (9.32,-8.987777777777765)-- (9.38,-8.907777777777765)-- (9.38,-8.807777777777765)-- (9.44,-8.747777777777765)-- (9.48,-8.667777777777765)-- (9.5,-8.567777777777765)-- (9.5,-8.407777777777765)-- (9.52,-8.327777777777767)-- (9.56,-8.267777777777766)-- (9.62,-8.207777777777766)-- (9.7,-8.167777777777767)-- (9.76,-8.127777777777766)-- (9.84,-8.127777777777766)-- (9.88,-8.067777777777767)-- (9.9,-8.007777777777767)-- (9.94,-7.947777777777767);
\draw [color=qqqqff](12,-2.8) node {centre vertices};
\draw [line width=0.8pt,color=qqqqqq] (11.82,-5.157777777777765)-- (11.76,-5.137777777777766)-- (11.7,-5.0577777777777655)-- (11.66,-4.957777777777766)-- (11.6,-4.837777777777766)-- (11.6,-4.6377777777777665)-- (11.6,-4.417777777777767)-- (11.66,-4.277777777777767)-- (11.68,-4.177777777777767)-- (11.7,-4.117777777777768)-- (11.78,-4.057777777777767)-- (11.9,-3.9977777777777677)-- (12.02,-3.957777777777768)-- (12.1,-3.957777777777768)-- (12.16,-3.897777777777768)-- (12.22,-3.857777777777768)-- (12.4,-3.7777777777777684)-- (12.52,-3.6977777777777683)-- (12.62,-3.6777777777777683)-- (12.72,-3.6377777777777687)-- (12.86,-3.5977777777777686)-- (12.96,-3.5977777777777686)-- (13.04,-3.5577777777777686)-- (13.08,-3.497777777777769)-- (13.14,-3.437777777777769)-- (13.16,-3.3377777777777693)-- (13.24,-3.2777777777777692)-- (13.36,-3.237777777777769)-- (13.5,-3.237777777777769)-- (13.62,-3.237777777777769)-- (13.74,-3.2577777777777692)-- (13.82,-3.2977777777777693)-- (13.88,-3.397777777777769)-- (13.94,-3.437777777777769)-- (14.02,-3.5377777777777686)-- (14.1,-3.5377777777777686)-- (14.2,-3.5177777777777686)-- (14.3,-3.477777777777769)-- (14.38,-3.477777777777769)-- (14.38,-3.5777777777777686)-- (14.38,-3.6577777777777687)-- (14.3,-3.7977777777777684)-- (14.24,-3.937777777777768)-- (14.14,-4.097777777777767)-- (14.12,-4.177777777777767)-- (14.1,-4.237777777777767)-- (14.06,-4.2977777777777675)-- (14.,-4.357777777777767)-- (13.92,-4.437777777777767)-- (13.86,-4.457777777777767)-- (13.78,-4.557777777777766)-- (13.72,-4.597777777777766)-- (13.64,-4.697777777777766)-- (13.56,-4.757777777777767)-- (13.5,-4.777777777777766)-- (13.44,-4.797777777777766)-- (13.36,-4.797777777777766)-- (13.28,-4.797777777777766)-- (13.2,-4.797777777777766)-- (13.12,-4.817777777777766)-- (13.04,-4.857777777777766)-- (12.98,-4.897777777777766)-- (12.92,-4.917777777777766);
\draw [line width=1pt,color=qqqqff] (-10.,-5.)-- (-4.,-5.);
\draw [line width=0.8pt,color=qqqqqq] (-4.,-4.897777777777766)-- (-4.,-4.817777777777766)-- (-4.,-4.337777777777767)-- (-3.98,-3.9977777777777677)-- (-3.96,-3.897777777777768)-- (-3.9,-3.6977777777777683)-- (-3.76,-3.5577777777777686)-- (-3.68,-3.477777777777769)-- (-3.56,-3.397777777777769)-- (-3.42,-3.3377777777777693)-- (-3.32,-3.3377777777777693)-- (-3.1,-3.3177777777777693)-- (-2.92,-3.3177777777777693)-- (-2.78,-3.3177777777777693)-- (-2.66,-3.3177777777777693)-- (-2.6,-3.3377777777777693)-- (-2.52,-3.3377777777777693)-- (-2.44,-3.3377777777777693)-- (-2.34,-3.3377777777777693)-- (-2.14,-3.3377777777777693)-- (-2.,-3.2577777777777692)-- (-1.88,-3.1977777777777696)-- (-1.8,-3.1977777777777696)-- (-1.72,-3.1777777777777696)-- (-1.6,-3.1777777777777696)-- (-1.5,-3.217777777777769)-- (-1.42,-3.2777777777777692)-- (-1.3,-3.377777777777769)-- (-1.18,-3.6977777777777683)-- (-1.02,-4.137777777777767)-- (-0.94,-4.397777777777767)-- (-0.92,-4.497777777777767)-- (-0.84,-4.617777777777767)-- (-0.74,-4.897777777777766)-- (-0.7,-5.037777777777766)-- (-0.68,-5.137777777777766)-- (-0.68,-5.357777777777765)-- (-0.68,-5.4777777777777645)-- (-0.7,-5.537777777777765)-- (-0.76,-5.617777777777764)-- (-0.86,-5.737777777777764)-- (-0.9,-5.797777777777764)-- (-0.94,-5.917777777777764)-- (-1.02,-6.157777777777763)-- (-1.14,-6.397777777777763)-- (-1.2,-6.517777777777763)-- (-1.26,-6.597777777777762)-- (-1.34,-6.6577777777777625)-- (-1.4,-6.6977777777777625)-- (-1.48,-6.717777777777762)-- (-1.68,-6.817777777777762)-- (-1.74,-6.837777777777762)-- (-1.88,-6.917777777777761)-- (-1.98,-7.057777777777761)-- (-2.02,-7.137777777777761)-- (-2.1,-7.337777777777761)-- (-2.18,-7.55777777777776)-- (-2.2,-7.75777777777776)-- (-2.24,-7.91777777777776)-- (-2.3,-7.977777777777759)-- (-2.36,-8.037777777777759)-- (-2.48,-8.037777777777759)-- (-2.78,-7.977777777777759)-- (-3.,-7.89777777777776)-- (-3.16,-7.85777777777776)-- (-3.3,-7.75777777777776)-- (-3.42,-7.67777777777776)-- (-3.5,-7.59777777777776)-- (-3.64,-7.357777777777761)-- (-3.8,-7.157777777777762)-- (-3.9,-6.997777777777761)-- (-4.04,-6.857777777777762)-- (-4.14,-6.757777777777762)-- (-4.2,-6.6577777777777625)-- (-4.24,-6.557777777777763)-- (-4.26,-6.497777777777762)-- (-4.3,-6.417777777777763)-- (-4.32,-6.337777777777763)-- (-4.36,-6.2777777777777635)-- (-4.36,-6.197777777777763)-- (-4.38,-6.097777777777764)-- (-4.38,-5.957777777777764)-- (-4.38,-5.877777777777764)-- (-4.38,-5.797777777777764)-- (-4.38,-5.717777777777764)-- (-4.32,-5.677777777777765)-- (-4.28,-5.617777777777764)-- (-4.2,-5.537777777777765)-- (-4.14,-5.4777777777777645)-- (-4.08,-5.417777777777765)-- (-4.,-5.377777777777765)-- (-3.96,-5.217777777777766)-- (-3.94,-5.0977777777777655)-- (-3.94,-4.997777777777766);
\draw [line width=0.8pt,color=qqqqqq] (-10.08,-4.977777777777765)-- (-10.08,-4.877777777777766)-- (-10.14,-4.657777777777766)-- (-10.22,-4.457777777777767)-- (-10.28,-4.337777777777767)-- (-10.32,-4.237777777777767)-- (-10.36,-4.157777777777768)-- (-10.42,-4.077777777777768)-- (-10.48,-4.017777777777768)-- (-10.54,-3.9777777777777676)-- (-10.6,-3.897777777777768)-- (-10.66,-3.857777777777768)-- (-10.72,-3.837777777777768)-- (-10.78,-3.7977777777777684)-- (-10.88,-3.7977777777777684)-- (-10.96,-3.7977777777777684)-- (-11.06,-3.7977777777777684)-- (-11.26,-3.857777777777768)-- (-11.32,-3.937777777777768)-- (-11.38,-4.057777777777767)-- (-11.48,-4.217777777777767)-- (-11.54,-4.2577777777777674)-- (-11.68,-4.237777777777767)-- (-11.74,-4.197777777777767)-- (-12.,-4.157777777777768)-- (-12.08,-4.137777777777767)-- (-12.22,-4.137777777777767)-- (-12.3,-4.137777777777767)-- (-12.38,-4.057777777777767)-- (-12.46,-3.9977777777777677)-- (-12.52,-3.9777777777777676)-- (-12.6,-3.9777777777777676)-- (-12.68,-3.9777777777777676)-- (-12.76,-3.9777777777777676)-- (-12.84,-3.9777777777777676)-- (-12.92,-3.9977777777777677)-- (-13.,-4.057777777777767)-- (-13.04,-4.177777777777767)-- (-13.04,-4.2977777777777675)-- (-13.04,-4.577777777777767)-- (-13.02,-4.6377777777777665)-- (-13.,-4.837777777777766)-- (-12.96,-4.897777777777766)-- (-12.9,-5.0177777777777655)-- (-12.8,-5.197777777777765)-- (-12.74,-5.317777777777765)-- (-12.66,-5.4777777777777645)-- (-12.62,-5.557777777777765)-- (-12.6,-5.637777777777765)-- (-12.54,-5.717777777777764)-- (-12.54,-5.797777777777764)-- (-12.52,-5.8577777777777635)-- (-12.52,-5.937777777777764)-- (-12.46,-6.017777777777764)-- (-12.4,-6.077777777777763)-- (-12.32,-6.137777777777764)-- (-12.26,-6.217777777777763)-- (-12.22,-6.437777777777763)-- (-12.22,-6.597777777777762)-- (-12.22,-6.777777777777762)-- (-12.22,-6.857777777777762)-- (-12.24,-6.917777777777761)-- (-12.24,-6.997777777777761)-- (-12.26,-7.057777777777761)-- (-12.26,-7.137777777777761)-- (-12.26,-7.217777777777761)-- (-12.24,-7.277777777777761)-- (-12.12,-7.3777777777777604)-- (-12.06,-7.437777777777761)-- (-11.88,-7.51777777777776)-- (-11.68,-7.51777777777776)-- (-11.5,-7.51777777777776)-- (-11.36,-7.51777777777776)-- (-11.12,-7.397777777777761)-- (-10.98,-7.337777777777761)-- (-10.92,-7.317777777777761)-- (-10.78,-7.257777777777761)-- (-10.54,-7.137777777777761)-- (-10.42,-7.0777777777777615)-- (-10.36,-7.0377777777777615)-- (-10.3,-6.957777777777761)-- (-10.22,-6.837777777777762)-- (-10.16,-6.677777777777762)-- (-10.12,-6.617777777777762)-- (-10.06,-6.437777777777763)-- (-10.02,-6.297777777777763)-- (-9.98,-6.237777777777763)-- (-9.92,-6.137777777777764)-- (-9.84,-6.117777777777763)-- (-9.82,-6.037777777777763)-- (-9.82,-5.937777777777764)-- (-9.82,-5.757777777777764)-- (-9.82,-5.657777777777764)-- (-9.82,-5.497777777777765)-- (-9.88,-5.337777777777765)-- (-9.92,-5.277777777777765)-- (-9.98,-5.257777777777765)-- (-10.,-5.197777777777765)-- (-10.06,-5.137777777777766)-- (-10.1,-5.077777777777766)-- (-10.12,-5.0177777777777655);
\draw (0.5,-4) node {\Large $\Rightarrow$};
\draw (-7,-10) node {\Large $H$};
\draw (5.84,-10) node {\Large $B_{\Gamma}$};
\draw[color=ffqqqq] (8.25,-2.1) node {$i,j$};
\draw[color=ffqqqq] (10.8,-4.6) node {$j,i$};
\draw[color=ffqqqq] (-9.5,-4.5) node {$i$};
\draw[color=ffqqqq] (-4.5,-4.5) node {$j$};
\draw [fill=black] (8.,0.) circle (4pt);
\draw [fill=black] (5.,3.) circle (4pt);
\draw [fill=black] (2.,0.) circle (4pt);
\draw [fill=black] (5.,-3.) circle (4pt);
\draw [fill=black] (7.121320343559643,2.1213203435596424) circle (4pt);
\draw [fill=black] (7.77163859753386,1.1480502970952693) circle (4pt);
\draw [fill=black] (6.14805029709527,2.77163859753386) circle (4pt);
\draw [fill=black] (3.8519497029047307,2.77163859753386) circle (4pt);
\draw [fill=black] (2.8786796564403576,2.121320343559643) circle (4pt);
\draw [fill=black] (2.22836140246614,1.1480502970952697) circle (4pt);
\draw [fill=black] (2.2283614024661396,-1.148050297095269) circle (4pt);
\draw [fill=black] (2.878679656440357,-2.1213203435596424) circle (4pt);
\draw [fill=black] (3.8519497029047316,-2.7716385975338604) circle (4pt);
\draw [fill=black] (6.14805029709527,-2.77163859753386) circle (4pt);
\draw [fill=black] (7.77163859753386,-1.1480502970952693) circle (4pt);
\draw [fill=ffqqqq] (7.121320343559643,-2.1213203435596424) circle (5pt);
\draw [fill=black] (7.310175230407013,-0.9555576407595231) circle (1.5pt);
\draw [fill=black] (7.384666652112939,-0.7505764173623282) circle (1.5pt);
\draw [fill=black] (7.218101980275747,-1.153266493529059) circle (1.5pt);
\draw [fill=black] (5.957859283554317,-2.309221858745671) circle (1.5pt);
\draw [fill=black] (5.7529523935266615,-2.383917509705919) circle (1.5pt);
\draw [fill=black] (6.155476286062875,-2.216951635094087) circle (1.5pt);
\draw [fill=black] (16.,-8.) circle (4pt);
\draw [fill=black] (13.,-5.) circle (4pt);
\draw [fill=black] (10.,-8.) circle (4pt);
\draw [fill=black] (13.,-11.) circle (4pt);
\draw [fill=black] (15.121320343559642,-5.878679656440358) circle (4pt);
\draw [fill=black] (15.77163859753386,-6.851949702904731) circle (4pt);
\draw [fill=black] (14.148050297095269,-5.22836140246614) circle (4pt);
\draw [fill=black] (11.851949702904731,-5.22836140246614) circle (4pt);
\draw [fill=ffqqqq] (10.878679656440358,-5.878679656440357) circle (5pt);
\draw [fill=black] (10.22836140246614,-6.85194970290473) circle (4pt);
\draw [fill=black] (10.22836140246614,-9.148050297095269) circle (4pt);
\draw [fill=black] (10.878679656440358,-10.121320343559642) circle (4pt);
\draw [fill=black] (11.851949702904731,-10.77163859753386) circle (4pt);
\draw [fill=black] (14.14805029709527,-10.77163859753386) circle (4pt);
\draw [fill=black] (15.77163859753386,-9.148050297095269) circle (4pt);
\draw [fill=black] (15.121320343559642,-10.121320343559642) circle (4pt);
\draw [fill=black] (12.029363136866388,-5.696119777435003) circle (1.5pt);
\draw [fill=black] (12.24704760647334,-5.6160824902940805) circle (1.5pt);
\draw [fill=black] (11.844523713937125,-5.783048364905913) circle (1.5pt);
\draw [fill=black] (10.689824769592988,-7.044442359240477) circle (1.5pt);
\draw [fill=black] (10.615333347887061,-7.249423582637672) circle (1.5pt);
\draw [fill=black] (10.781898019724252,-6.846733506470941) circle (1.5pt);
\draw [fill=qqqqff] (9.,-4.) circle (4pt);
\draw [fill=qqqqff] (8.503888888888886,-3.5038888888888855) circle (4pt);
\draw [fill=qqqqff] (9.503888888888886,-4.503888888888886) circle (4pt);
\draw [fill=ffqqqq] (-10.,-5.) circle (5pt);
\draw [fill=ffqqqq] (-4.,-5.) circle (5pt);
\end{tikzpicture}
\end{center}
\caption{Effect of an Edge in $H$ on the Board $B_\Gamma$}
\label{fig:edgetopath}
\end{figure}
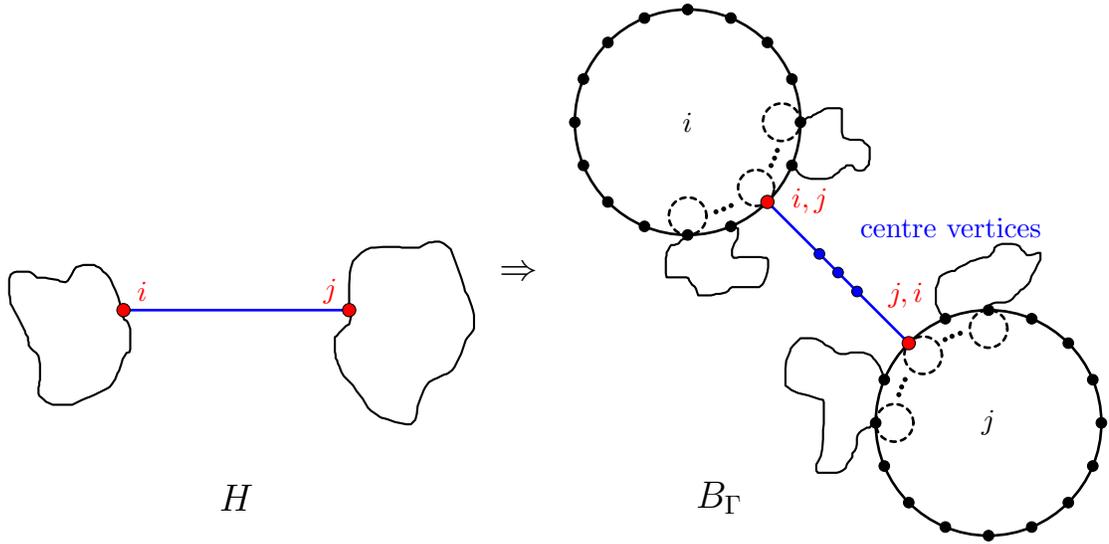
\end{construction}

As an example for this construction, consider the following:
\begin{example}\label{ex:graph1}
Let $\Gamma$ be a path of three vertices so that $H=\Gamma$. Let the two end vertices belong to $\L$, and the centre vertex to $\R$. Since $\Gamma$ consists of three vertices, \ie $n=3$, the cycle $i$ (where $i\in\L$) is of length $3^4+4=85$ with two cycles of length $3^3=27$ joined to two adjacent vertices, and the cycle $j$ (where $j\in\R$) is of length 86 with two cycles of length 27 joined to two adjacent vertices.

Label the edge between vertex 1 (an end vertex) and vertex 2 (the centre vertex) as 1, and the edge between vertex 2 and vertex 3 (the other end vertex) as 2.

The board $B_\Gamma$ is given in \cref{fig:exampleP3}. Dashed, blue cycles consist of 85 vertices, and dotted, red cycles of 86 vertices, with the two labelled vertices adjacent in both cases. The smaller solid cycles consist of 27 vertices.

\begin{figure}[!ht]
\begin{center}
\definecolor{ffqqqq}{rgb}{1.,0.,0.}
\definecolor{qqqqff}{rgb}{0.,0.,1.}
\begin{tikzpicture}[line cap=round,line join=round,>=triangle 45,x=1.0cm,y=1.0cm,scale=0.68]
\draw [line width=1.2pt,dash pattern=on 5pt off 5pt,color=qqqqff] (2.,2.) circle (2.cm);
\draw [line width=1.2pt,dotted,color=ffqqqq] (10.,2.) circle (2.cm);
\draw [line width=1.2pt,dash pattern=on 5pt off 5pt,color=qqqqff] (6.,-4.) circle (2.cm);
\draw [line width=1.2pt] (4.,2.)-- (8.,2.);
\draw [line width=1.2pt] (8.585786437626904,0.5857864376269051)-- (7.414213562373095,-2.585786437626905);
\draw [line width=1.2pt] (3.5,2.) circle (0.5cm);
\draw [line width=1.2pt] (3.0606601717798214,0.9393398282201788) circle (0.5cm);
\draw [line width=1.2pt] (8.5,2.) circle (0.5cm);
\draw [line width=1.2pt] (8.931149770827384,0.9475936205069617) circle (0.5001802370175656cm);
\draw [line width=1.2pt] (6.,-2.5) circle (0.5cm);
\draw [line width=1.2pt] (7.087054120990253,-2.966407557091235) circle (0.5019021186129954cm);
\draw [line width=1.2pt] (3.,7.)-- (6.,7.);
\draw [line width=1.2pt] (6.,7.)-- (9.,7.);
\draw (0,7.2) node {$H$};
\draw (0,-2.32) node {$B$};
\draw (2,2) node {$1$};
\draw (10,2) node {$2$};
\draw (6,-4) node {$3$};
\draw [fill=black] (4.,2.) circle (3pt);
\draw[color=black] (4.5,2.39) node {$1,2$};
\draw [fill=black] (3.414213562373095,0.5857864376269051) circle (3pt);
\draw[color=black] (3.9,0.25) node {$1,3$};
\draw [fill=black] (8.,2.) circle (3pt);
\draw[color=black] (7.5,2.39) node {$2,1$};
\draw [fill=black] (8.585786437626904,0.5857864376269051) circle (3pt);
\draw[color=black] (7.75,0.6) node {$2,3$};
\draw [fill=black] (7.414213562373095,-2.585786437626905) circle (3pt);
\draw[color=black] (8,-2.5) node {$3,2$};
\draw [fill=black] (6.,-2.) circle (3pt);
\draw[color=black] (6,-1.63) node {$3,1$};
\draw [fill=black] (6.,2.) circle (3pt);
\draw [fill=black] (7.9,-1.25) circle (3pt);
\draw [fill=black] (8.1,-0.75) circle (3pt);
\draw [fill=qqqqff] (3.,7.) circle (3pt);
\draw[color=qqqqff] (3,6.5) node {$1$};
\draw [fill=ffqqqq] (6.,7.) circle (3pt);
\draw[color=ffqqqq] (6,6.5) node {$2$};
\draw [fill=qqqqff] (9.,7.) circle (3pt);
\draw[color=qqqqff] (9,6.5) node {$3$};
\end{tikzpicture}
\end{center}
\caption{Constructing $B_{P_3}$}
\label{fig:exampleP3}
\end{figure}
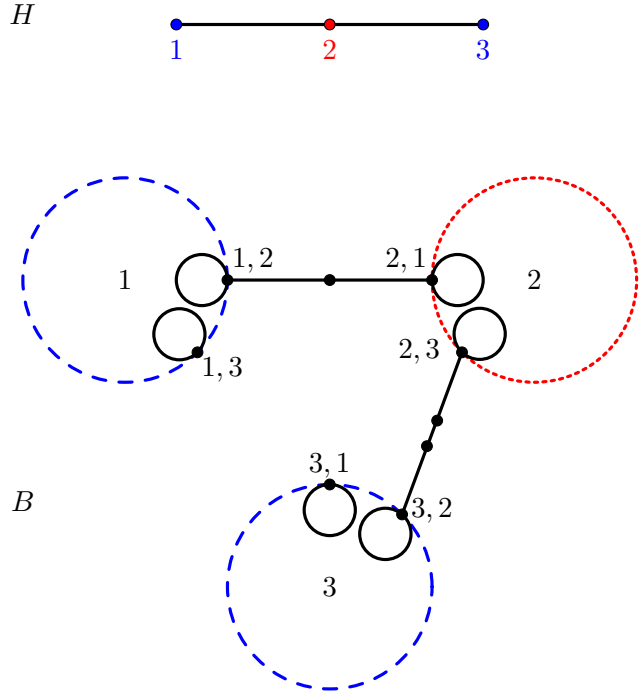
\end{example}

For the next construction, we will have to specify what is meant by distance between pieces.
\begin{definition}
Let two pieces $P_1$ and $P_2$ be placed on a board $B$ and let $V_1$ and $V_2$ be the set of vertices on which $P_1$, respectively $P_2$, was placed. We then define the \textbf{distance} $d(P_1, P_2)$ between $P_1$ and $P_2$ by
\[d(P_1,P_2)=\min\{d(v_1,v_2): v_1\in V_1, v_2\in V_2\},\]
where $d(v_1,v_2)$ is the graph theoretic distance between $v_1$ and $v_2$, \ie the minimum number of edges of a path in $B$ with endpoints $v_1$ and $v_2$.
\end{definition}

\begin{construction}[\textbf{$\Gamma$-ruleset}]
Given an $(\L,\R)$-labelled simplicial complex $\Gamma$ with no isolated vertices we construct a ruleset $R_\Gamma$ for an SP-game (called the \textbf{$\Gamma$-ruleset}).

If $\Gamma$ is empty, then let $R_\Gamma$ be the ruleset in which Left and Right place pieces on a single vertex with no restrictions.

If $\Gamma$ is non-empty, then construct $R_\Gamma$ as follows:
\begin{itemize}
\item Let $n$ be the number of vertices of $\Gamma$. Label the edges (the 1-dimensional faces) of $\Gamma$ as $\{1,\ldots,k\}$.
\item Left plays cycles of length $n^4+4$ with cycles of length $n^3$ joined to $n-1$ consecutive vertices,
\item Right plays cycles of length $n^4+5$ with cycles of length $n^3$ joined to $n-1$ consecutive vertices (\ie the pieces are as the structure given in \cref{fig:cyclei}), and
\item Let $F$ be a facet of $\Gamma$ of dimension $f-1$, whose 1-dimensional faces are labelled $k_1,\ldots, k_l$, where $l=\binom{f}{2}$. We call the set $\{k_1+1,\ldots, k_l+1\}$ the \textbf{id-set} of $F$. Then no sets of $f$ pieces are allowed such that the set of distances between any two pieces is exactly the id-set of $F$.
\end{itemize}
\end{construction}

\begin{example}\label{ex:graph2}
Let $\Gamma$ be a path of three vertices so that $n=3$. Left's pieces are cycles of length $3^4+4=85$ with two cycles of length $3^3=27$ joined to two adjacent vertices, and Rights pieces are cycles of length 86 with two cycles of length 27 joined to two adjacent vertices.

Since the facets of $\Gamma$ are the two edges (thus of size 2), the edge in one facet are labelled as $1$, and in the other as $2$. Thus the id-sets are $\{2\}$ and $\{3\}$, implying that in $R_\Gamma$ no two pieces are allowed to have distance 2 or distance 3.
\end{example}

\begin{example}
Consider $\Gamma=\langle abc, ad\rangle$. Label the edge between $a$ and $b$ as 1, between $b$ and $c$ as 2, between $c$ and $a$ as 3, and between $a$ and $d$ as 4.

For the facet $abc$ we have the id-set $\{1+1, 2+1, 3+1\}=\{2,3,4\}$. Thus in the $\Gamma$-ruleset $R_\Gamma$ we cannot have three pieces where the distances between pairs are $\{2,3,4\}$, while two with any one of these distance are allowed.

For the facet $ad$ we have the id-set $\{4+1\}=\{5\}$. Thus in $R_\Gamma$ we cannot have any two pieces with distance 5.
\end{example}

\begin{lemma}\label{lem:illegal}
Given an $(\L,\R)$-labelled simplicial complex $\Gamma$ with no isolated vertices, the $\Gamma$-ruleset $R_\Gamma$ is invariant.
\end{lemma}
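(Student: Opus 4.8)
The plan is to verify that $G_\Gamma$ satisfies the four conditions of \cref{def:placementgame} (so that it is an SP-game at all) and then the two conditions of invariance. The invariance part is almost immediate from the construction: the pieces Left and Right play are fixed graphs (cycles of length $n^4+4$ or $n^4+5$ with inner $n^3$-cycles attached to $n-1$ consecutive vertices), the prohibition is phrased purely in terms of distances between pieces measured in the host board, and a single piece has no distance constraints to satisfy, so every basic position is legal. Since isomorphic subgraphs of any board preserve graph-theoretic distances, a position is legal on $B_1$ if and only if its isomorphic image on $B_2$ is legal. So the only real content is checking that $G_\Gamma$ is an SP-game, and within that, condition (iv).

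First I would dispose of conditions (i)--(iii): the board starts empty, players place pieces on empty vertices (we should note a piece must land on vertices not already occupied, which is part of ``placement''), and nothing is moved or removed. Then for condition (iv) — the order-independence condition — I would argue as follows. The ruleset forbids a position precisely when some subset of $f$ of its pieces realizes the id-set of some facet $F$ of $\Gamma$ as its set of pairwise distances. This is a condition on the \emph{set} of pieces present, not on the order in which they were placed: if a configuration of pieces contains a forbidden $f$-subset, then it is illegal, and any configuration not containing such a subset is reachable. More precisely, I would show that a position $P$ (a set of pieces placed on disjoint vertex sets) is reachable by \emph{some} legal sequence if and only if no $f$-element subset of its pieces has pairwise-distance set equal to an id-set of a facet of $\Gamma$; and this characterization is manifestly symmetric in the pieces. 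Hence if $P$ is reachable by some legal sequence, every ordering of its pieces gives a legal sequence, because each intermediate position is a subset of $P$ and therefore also contains no forbidden $f$-subset (a subset of a ``good'' set is ``good''). This last point uses that the collection of legal positions is closed under taking subsets of the pieces played, which holds because removing pieces can only destroy, never create, a forbidden distance-pattern.

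The step I expect to be the genuine obstacle is making the ``any configuration not containing a forbidden subset is actually reachable'' direction airtight — i.e., that the legal positions are exactly the subset-closed family described, with nothing extra excluded. One must rule out the degenerate possibility that a legal-looking position is unreachable because every move order passes through an illegal intermediate position; but this cannot happen here precisely because of subset-closure: starting from the empty board and adding the pieces of $P$ one at a time in any order, each partial configuration is a subset of $P$, hence good, hence legal. So reachability of $P$ reduces exactly to $P$ being good. The only subtlety is that this argument lives entirely on the abstract level of distance-patterns and does not yet use anything about the specific cycle lengths $n^4+4$, $n^3$, etc.; those large-girth choices are needed later (presumably to prove that $\Gamma_{G_\Gamma, B_\Gamma} = \Gamma$, i.e.\ that the construction realizes the intended complex and no spurious coincidences of distances occur), but for the present lemma — that $G_\Gamma$ is an iSP-game — they play no role, and I would remark on this explicitly rather than invoke them.
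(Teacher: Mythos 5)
Your proposal is correct, and on the part the paper actually argues it takes the same route: the published proof checks only the two invariance conditions, noting that since $\Gamma$ has no isolated vertices every facet has at least one edge, so every id-set is non-empty and every illegal position involves at least two pieces (hence all basic positions are legal), and that legality is phrased entirely in terms of distances between pieces and is therefore preserved by isomorphisms of subgraphs. Your extra verification of condition (iv) of \cref{def:placementgame} --- legality of a position depends only on the set of pieces placed, the family of legal positions is closed under subsets because deleting pieces cannot create a forbidden distance-pattern, and so every ordering of a legal position's pieces passes through legal intermediates --- is left implicit in the paper (it is really part of justifying that the construction produces an SP-game at all), and is a worthwhile addition. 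Two small points to tighten. First, your claim that ``a single piece has no distance constraints to satisfy'' should be tied explicitly to the hypothesis: a facet consisting of an isolated vertex would have an \emph{empty} id-set, which the empty distance-set of a single piece realizes vacuously, so it is precisely the no-isolated-vertices assumption that forces every forbidden configuration to involve at least two pieces. Second, the construction defines $G_\Gamma$ differently when $\Gamma=\emptyset$, and the paper dispatches that case separately (no illegal positions, hence trivially invariant); your argument should at least note this case, trivial as it is. Your closing remark that the specific cycle lengths $n^4+4$, $n^4+5$, $n^3$ play no role in this lemma and are only needed to prove $\Gamma_{G_\Gamma,B_\Gamma}=\Gamma$ matches the paper exactly.
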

\begin{proof}
If $\Gamma$ is empty, then $R_\Gamma$ played on any board has no illegal positions, thus is trivially invariant.

If $\Gamma$ is non-empty, then since $\Gamma$ has no isolated vertices, 
all facets have at least one edge and therefore all id-sets are non-empty. 
In particular, this means that every illegal position of $R_\Gamma$ played on any board has at least two pieces, so there are no illegal basic positions.

Now suppose that we are playing $R_\Gamma$ on isomorphic boards $B_1$ and $B_2$. Making a move to a position $P$ is legal on $B_1$ if and only if there is no id-set which is contained in the set of distances between pieces of $P$, which holds if and only if $P$ is legal on $B_2$.

Thus $R_\Gamma$ is invariant.
\end{proof}

The following statement will prove that every simplicial complex without isolated vertices can appear as the illegal complex of (many!) iSP-games.

\begin{theorem}[\textbf{Invariant Game from Illegal Complex}]\label{thm:Gammagameandgraph}
Given an $(\L,\R)$-labelled simplicial complex $\Gamma$ with no isolated vertices, fix labellings of the vertices and of the edges. Then $\Gamma$ is the illegal complex of the $\Gamma$-ruleset $R_\Gamma$ played on the $\Gamma$-board $B_\Gamma$, \ie $\Gamma_{R_\Gamma,B_\Gamma}=\Gamma$.
\end{theorem}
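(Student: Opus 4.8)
The plan is to show that the minimal illegal positions of $(G_\Gamma,B_\Gamma)$ are exactly the facets of $\Gamma$; since $\Gamma_{G_\Gamma,B_\Gamma}=\facet{\illegalI{G_\Gamma,B_\Gamma}}$ is the facet complex of the illegal ideal, this yields $\Gamma_{G_\Gamma,B_\Gamma}=\Gamma$. The case $\Gamma=\emptyset$ is immediate ($G_\Gamma$ on the empty board has no illegal positions), so assume $\Gamma$ is non-empty and organize the proof into three steps: (1) identify the basic positions of $(G_\Gamma,B_\Gamma)$, label-preservingly, with the vertices of $\Gamma$; (2) compute the graph distance in $B_\Gamma$ between two basic positions; (3) feed (1) and (2) into the id-set rule and read off the minimal illegal positions.

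Step (1). A piece is a cycle of length $n^4+4$ (Left) or $n^4+5$ (Right) with a pendant $n^3$-cycle on each of $n-1$ consecutive vertices. I would show that in $B_\Gamma$ the only place such a structure fits is on a main cycle: the inner cycles have the wrong length $n^3$, the edge-paths are paths and carry no pendant cycles, and any cycle assembled from arcs of several main cycles joined by edge-paths either has length different from $n^4+4$ and $n^4+5$ or does not carry $n-1$ consecutive pendant $n^3$-cycles, because the connection (decorated) vertices such an arc contributes come in short runs separated by edge-paths; here the magnitudes $n^4+4,\ n^4+5\gg n,k$ and the fact that a main cycle has exactly $n-1$ connection vertices (in one block) do the work. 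Since main cycle $i$ has length $n^4+4$ iff $i\in\L$ and $n^4+5$ iff $i\in\R$, and carries its own block of $n-1$ pendant $n^3$-cycles, each Left (resp.\ Right) piece can be placed, essentially uniquely, exactly on a main cycle indexed by $\L$ (resp.\ $\R$). Write $v_i$ for the basic position on main cycle $i$; these correspond label-preservingly to the vertices of $\Gamma$.

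Step (2). The key claim is: if $\{i,j\}$ is the edge of $H$ with label $l$ then $d_{B_\Gamma}(v_i,v_j)=l+1$, while if $i\ne j$ and $\{i,j\}\notin E(H)$ then $d_{B_\Gamma}(v_i,v_j)>k+1$ (possibly $\infty$). The upper bound $d_{B_\Gamma}(v_i,v_j)\le l+1$ is clear, since $v_i$ occupies all of main cycle $i$ including its connection vertices and the edge-path for $\{i,j\}$ is a path of $l+1$ edges; the content is that this is shortest, and that main cycles not joined by an edge-path are much farther apart. Any walk from $v_i$ to $v_j$ either uses the connecting edge-path once, or crosses at least two edge-paths and in between traverses arcs of intermediate main cycles; I would bound such detours below using that distinct edge-paths have distinct lengths $2,3,\dots,k+1$, that the connection vertices of a main cycle form a single block of only $n-1$ consecutive vertices, and that the main and inner cycles are large. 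I expect this distance analysis to be the main obstacle: it is where the precise choice of parameters is used, and one must carefully rule out \emph{shortcuts} through third main cycles and verify that non-adjacent indices are genuinely at distance $>k+1$.

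Step (3). Granting (1) and (2), the rest is bookkeeping. By the ruleset, a position is illegal iff some subset of its pieces, sitting on a vertex set $S'$ with $|S'|=f$, has pairwise-distance set exactly the id-set of some facet $F$ with $\dim F=f-1$, that is $|F|=|S'|$. By (2), $S'$ cannot contain a non-edge of $H$ (that pair would contribute a distance exceeding $k+1$, while an id-set lies in $\{2,\dots,k+1\}$), so $S'$ is a clique of $H$ and its distance set is $\{\,l+1:\ l\text{ the label of an edge inside }S'\,\}$. Since the edge-labelling is injective, this equals the id-set $\{\,l+1:\ l\text{ the label of an edge inside }F\,\}$ of $F$ iff $S'$ and $F$ have the same edge set, hence (the edge set of a clique determines its vertex set) iff $S'=F$; note $|S'|\ge2$ here, since $\Gamma$, having no isolated vertices, has no $0$-dimensional facets. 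Therefore a position sitting on vertex set $S$ is illegal iff $S$ contains a facet of $\Gamma$. Consequently each $\{v_i:i\in F\}$ with $F$ a facet is illegal and minimal (a proper subset of $F$ is a proper face and, facets being pairwise incomparable, contains no facet, hence is legal), while no position on a vertex set $S\supsetneq F$ is minimal (deleting a vertex of $S\setminus F$ leaves a position still containing $F$). Hence the minimal illegal positions of $(G_\Gamma,B_\Gamma)$ are exactly the $\{v_i:i\in F\}$ for $F$ a facet of $\Gamma$, and via $\Gamma_{G_\Gamma,B_\Gamma}=\facet{\illegalI{G_\Gamma,B_\Gamma}}$ this gives $\Gamma_{G_\Gamma,B_\Gamma}=\Gamma$, the $(\L,\R)$-labelling being preserved throughout.
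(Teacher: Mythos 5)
Your overall strategy is the same as the paper's: identify the basic positions with the labelled main cycles (your Step 1 matches the paper's counting argument), then translate the id-set rule into a statement about facets. Your Step 3 is in fact tidier than the paper's version of that step: the paper's converse argument only treats the cases ``some subset is a facet'' and ``the set is properly contained in a facet,'' whereas your characterization ``illegal iff the vertex set contains a facet'' covers all vertex sets at once. But everything hinges on Step 2, and that is where the proposal has a genuine gap.

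The two distance claims in Step 2 are not just unproven; as stated they are false for this construction. Suppose $i$ and $j$ share a neighbour $m$ in $H$, with $\{i,m\}$ labelled $l_1$ and $\{m,j\}$ labelled $l_2$. Walk from the connection vertex $(i,m)$ (occupied by the piece on cycle $i$) along that edge-path to $(m,i)$, then along the single block of consecutive connection vertices of main cycle $m$ to $(m,j)$, then along the second edge-path to $(j,m)$ (occupied by the piece on cycle $j$). This path has length $(l_1+1)+d_m+(l_2+1)$ with $1\le d_m\le n-2$, which can be as small as $l_1+l_2+3$, e.g.\ $6$ when $l_1=1$, $l_2=2$ and $(m,i)$, $(m,j)$ are adjacent in the block. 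So a non-adjacent pair $\{i,j\}$ need not be at distance greater than $k+1$ (already $6\le k+1$ once $k\ge 5$), and if the edge labelled $5$ happens to be a facet of $\Gamma$ this pair realizes its id-set $\{6\}$ and becomes a spurious minimal illegal position. The same detour undercuts your equality $d(v_i,v_j)=l+1$ for an edge labelled $l$ whenever $l>l_1+l_2+1+d_m$, so the distance set of a large facet need not equal its id-set and the intended illegal positions can fail to be illegal. The ingredients you cite (distinct path lengths, large main and inner cycles, the block structure) do not exclude this, because the detour uses only two short edge-paths and a short arc of connection vertices. In fairness, the paper's own proof simply asserts that positions $i_a$ and $i_b$ have distance $j_c+1$ without addressing such shortcuts, so you have put your finger on exactly the step the published argument also leaves unjustified; but your proposal does not supply the missing argument, and closing it seems to require either constraining the edge labelling or altering the construction (for instance, spacing the connection vertices far apart or rescaling the edge-path lengths so that no concatenation of two of them plus a short arc can collide with a third), not merely more careful bookkeeping.
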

\begin{proof}
Let $G=(R,B)$ where $B=B_\Gamma$ and $R=R_\Gamma$ are the $\Gamma$-board and $\Gamma$-ruleset respectively, with the same labelling of the edges of $\Gamma$ if $\Gamma$ is nonempty.

If $\Gamma$ is empty, then $G$ has no illegal positions, thus $\Gamma_{R,B}$ is also empty.

To show that indeed $\Gamma_{R,B}=\Gamma$ for $\Gamma$ nonempty, we will begin by showing that their vertex sets have the same size.

Let $H=\Gamma^{[1]}$. Clearly Left can place one of her pieces on the cycle labelled $i$ in $B$ if the vertex $i$ of $H$ belongs to $\L$. Similarly Right can place on cycles labelled $j$ where $j\in\R$. Thus each vertex in $H$ corresponds to a position in the game $G$ played on $B$.

We now need to show that there are no other ways for Left or Right to place pieces than what was previously mentioned, \ie that the positions of $G$ correspond exactly to the vertices of $H$. 

Let $n$ be the number of vertices of $H$ and $k$ be the number of edges. The cycles in $B$ which only use connection and centre vertices have size at most $n(n-1)+\frac{k(k+1)}{2}$ (there are $n(n-1)$ connection vertices and $1+\ldots+k$ centre vertices). Since there are at most $\binom{n}{2}$ edges in $H$, we have
\begin{align*}
n(n+1)+\frac{k(k+1)}{2}&\le n(n+1)+\frac{\frac{n(n+1)}{2}\left(\frac{n(n+1)}{2}+1\right)}{2}\\
&=\frac{1}{8}n^4+\frac{1}{4}n^3+\frac{11}{8}n^2+\frac{5}{4}n
\end{align*}
which is less than $n^4+4$ for all whole numbers. 


Thus such cycles are shorter than $n^4+4$, and Left and Right will not be able to play on those.

Furthermore, any cycle of length $n^4+4$ or $n^4+5$ in $B$ needs to include the outer vertices of some cycle $i$ (since as above cycles using only connection and centre vertices are shorter, and the inner cycles are shorter). To then construct a cycle of that length without using all connection vertices of cycle $i$, the cycle would have to include at least one centre vertex. Since centre vertices do not have cycles of length $n^3$ added, this implies that neither Left or Right could play there.

Thus Left and Right are only able to play on the labelled cycles.

Further, since the pieces consist of cycles with a differing number of vertices, either player will only be able to play on the cycles of $B$ that are designated to them. Thus there are $n$ positions, in each of which only one player can play, all corresponding to vertices of $\Gamma$. The vertices of $\Gamma_{R,B}$ are thus a subset of the vertices of $\Gamma$ and $\Gamma_{R,B}$ has less vertices than $\Gamma$ if and only if there exists at least one position in which it is never illegal to play, which we will show cannot happen as part of the rest of the proof.

It remains to show that the facets of $\Gamma_{R,B}$ and $\Gamma$ correspond.

Consider a facet consisting of the vertices $i_1,\ldots, i_k$ in $\Gamma$, thus any two vertices have an edge between them in $H$, and let these edges be $j_1,\ldots, j_l$. Then the positions $i_a$ and $i_b$, $a,b\in \{1,\ldots, k\}$, in $B$ have distance $j_c+1$, where $j_c$ is the edge between $i_a$ and $i_b$ in $H$, (since we joined a path of length $j_c+2$ to their connection vertices). Thus it is illegal to play in all $k$ positions (and this is a minimal illegal position), and thus there is a facet consisting of the vertices $i_1,\ldots, i_k$ in $\Gamma_{R,B}$.

Now let the vertices $i_1,\ldots, i_k$ form a facet in $\Gamma_{R,B}$. Assume that $i_1,\ldots, i_k$ do not form a facet in $\Gamma$. If some subset $S$ of these vertices forms a facet, then by construction of $G$ it would be illegal to play pieces on all of the cycles in $B$ corresponding to vertices in $S$. Thus $i_1,\ldots, i_k$ is not a \textit{minimal} illegal position, a contradiction to those vertices forming a facet in $\Gamma_{R,B}$. If on the other hand $i_1,\ldots, i_k$ is strictly contained in some facet $F$ of $\Gamma$, then by construction of $G$ it is legal to play on cycles $i_1,\ldots, i_k$ in $B$. Thus $i_1,\ldots, i_k$ is not an \textit{illegal} position, a contradiction to those vertices forming a facet in $\Gamma_{R,B}$. Therefore $i_1,\ldots, i_k$ is a facet of $\Gamma$.

Finally, since $H$ has no isolated vertices (by $\Gamma$ not having such), the vertex set of $\Gamma_{R,B}$ is a subset of the vertex set of $H$, \ie the vertex set of $\Gamma$. Since furthermore the facets of $\Gamma_{R,B}$ and $\Gamma$ correspond, we have that the vertex set of $\Gamma_{R,B}$ is equal to that of $\Gamma$.

Consequently, the simplicial complexes $\Gamma$ and $\Gamma_{R,B}$ have the same vertex and facet sets, which proves $\Gamma=\Gamma_{R,B}$.
\end{proof}

\begin{example}\label{ex:graph3}
Let $\Gamma$ be a path of three vertices. Let $B=B_\Gamma$ (see \cref{ex:graph1}) and $R=R_\Gamma$ (see \cref{ex:graph2}).

Then $\Gamma_{R,B}=\Gamma$.
\end{example}

Note: Simpler constructions with smaller cycles and pieces are often possible (as shown in the next example), but the above construction is guaranteed to work.

\begin{example}
Let $\Gamma$ be as in \cref{ex:graph3}. Let Left play cycles of length $3$, and Right cycles of length $4$. For the board $B'$ given in \cref{fig:exampleP3simple}, it is easy to check that $\Gamma_{R',B'}=\Gamma$, where $R'$ is the ruleset which forbids overlap between pieces.

\begin{figure}[!ht]
\begin{center}
\begin{tikzpicture}[line cap=round,line join=round,>=triangle 45,x=1.0cm,y=1.0cm,scale=0.5]
\draw [line width=1.2pt] (3.,3.)-- (6.,3.);
\draw [line width=1.2pt] (6.,3.)-- (4.5,5.598076211353316);
\draw [line width=1.2pt] (4.5,5.598076211353316)-- (3.,3.);
\draw [line width=1.2pt] (3.,3.)-- (0.,3.);
\draw [line width=1.2pt] (0.,3.)-- (0.,0.);
\draw [line width=1.2pt] (0.,0.)-- (3.,0.);
\draw [line width=1.2pt] (3.,0.)-- (3.,3.);
\draw [line width=1.2pt] (6.,3.)-- (6.,0.);
\draw [line width=1.2pt] (6.,0.)-- (9.,0.);
\draw [line width=1.2pt] (9.,0.)-- (9.,3.);
\draw [line width=1.2pt] (9.,3.)-- (6.,3.);
\draw (1.5,1.5) node {$1$};
\draw (4.5,4.25) node {$2$};
\draw (7.5,1.5) node {$3$};
\begin{scriptsize}
\draw [fill=black] (0.,3.) circle (5pt);
\draw [fill=black] (3.,3.) circle (5pt);
\draw [fill=black] (3.,0.) circle (5pt);
\draw [fill=black] (0.,0.) circle (5pt);
\draw [fill=black] (6.,3.) circle (5pt);
\draw [fill=black] (6.,0.) circle (5pt);
\draw [fill=black] (9.,0.) circle (5pt);
\draw [fill=black] (9.,3.) circle (5pt);
\draw [fill=black] (4.5,5.598076211353316) circle (5pt);
\end{scriptsize}
\end{tikzpicture}
\end{center}
\caption{Smaller board $B'$}
\label{fig:exampleP3simple}
\end{figure}
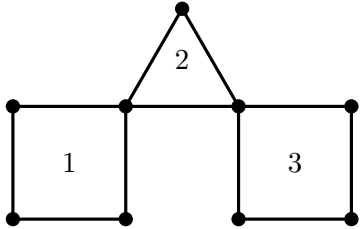
\end{example}

The following theorem summarizes our results about illegal complexes of iSP-games.

\begin{theorem}\label{thm:illegalcomplexinvariant}
A given simplicial complex $\Gamma$ is the illegal complex of some iSP-game $(R,B)$ if and only if $\Gamma$ has no isolated vertices.
\end{theorem}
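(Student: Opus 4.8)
The plan is to observe that \cref{thm:illegalcomplexinvariant} is nothing more than the combination of the results already proved in this section, so the proof consists of citing them in the correct order after reconciling terminology. The first thing I would do is note that "$\Gamma$ has an isolated vertex" means exactly that $\{a\}$ is a facet of $\Gamma$ for some vertex $a$, i.e. that $\Gamma$ has a one-element facet; so the hypothesis in the statement matches the conclusion of \cref{thm:illegalinvariantnovertices} verbatim.

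For the "only if" direction I would argue by contrapositive. If $\Gamma$ has an isolated vertex, then by \cref{thm:illegalinvariantnovertices} it is not the illegal complex of any iSP-game played on any board; equivalently, if $\Gamma$ is the illegal complex of some iSP-game on some board, then $\Gamma$ has no isolated vertex. Nothing new is required here.

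For the "if" direction, suppose $\Gamma$ has no isolated vertices. Since the statement does not constrain the Left/Right partition, I would fix \emph{any} $(\L,\R)$-bipartition of the vertex set of $\Gamma$ (for instance $\R=\emptyset$) together with labellings of its vertices and edges, and then form the $\Gamma$-board $B_\Gamma$ and the $\Gamma$-game $G_\Gamma$. By \cref{lem:illegal}, $G_\Gamma$ is an iSP-game, and by \cref{thm:Gammagameandgraph}, $\Gamma_{G_\Gamma,B_\Gamma}=\Gamma$. Hence $\Gamma$ is the illegal complex of the iSP-game $G_\Gamma$ played on $B_\Gamma$. The empty complex is covered by the same two references (with $G_\Gamma$ the unrestricted placement game and $B_\Gamma$ empty).

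I do not expect a genuine obstacle: all of the substantive work lies in \cref{thm:Gammagameandgraph}, and this theorem only repackages it. The two points that deserve an explicit sentence rather than being glossed over are (i) the identification of "no isolated vertices" with "no one-element facets", so that \cref{thm:illegalinvariantnovertices} applies directly, and (ii) the fact that game complexes carry a Left/Right partition, so one must first choose such a partition of $\Gamma$ — any choice works — before invoking the $\Gamma$-board and $\Gamma$-game constructions.
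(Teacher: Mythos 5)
Your proposal is correct and follows exactly the paper's own argument: the forward direction is \cref{thm:illegalinvariantnovertices} and the converse is \cref{thm:Gammagameandgraph} (with \cref{lem:illegal} guaranteeing invariance). The two clarifications you flag — identifying isolated vertices with one-element facets, and fixing an arbitrary $(\L,\R)$-bipartition before invoking the construction — are sensible but the paper treats them as implicit.
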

\begin{proof}
By \cref{thm:illegalinvariantnovertices} we have that if $\Gamma$ is the illegal complex of an iSP-game, then $\Gamma$ has no isolated vertices.

Conversely, if $\Gamma$ has no isolated vertices, then by \cref{thm:Gammagameandgraph}, we have that $\Gamma$ is the illegal complex of some iSP-game.
\end{proof}

We will now consider legal complexes. The first result shows that \textit{every} simplicial complex is the legal complex of some iSP-game and board.
\begin{theorem}[\textbf{Invariant Game from Legal Complex}]\label{thm:legalinv}
Given any $(\L,\R)$-labelled simplicial complex $\Delta$, we can construct an iSP-game $(R,B)$ such that $\Delta=\Delta_{R,B}$ and the sets of Left, respectively Right, positions is $\L$, respectively $\R$.
\end{theorem}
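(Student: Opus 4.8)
The plan is to derive \cref{thm:legalinv} from \cref{thm:Gammagameandgraph} through the Stanley-Reisner correspondence between the legal and illegal complexes. By the identities recorded in the Remark after \cref{def:gameComplexIdeal}, the legal complex of any SP-game satisfies $\Delta_{G,B}=\SR{\illegalI{G,B}}$, so it is determined by the illegal complex $\Gamma_{G,B}$, whose facets are exactly the minimal non-faces of $\Delta_{G,B}$. Hence, letting $\Gamma$ be the simplicial complex whose facets are the minimal non-faces of $\Delta$, it suffices to produce an iSP-game $G$ and a board $B$ with $\Gamma_{G,B}=\Gamma$ whose basic positions are in label-preserving bijection with the vertices of $\Delta$; then $\Delta_{G,B}=\SR{\facet{\Gamma}}=\Delta$. \cref{thm:Gammagameandgraph} almost supplies such a pair, but two gaps must be closed. (We use throughout that every element of $\L\cup\R$ is a $0$-face of $\Delta$; this is forced, since every basic position of an iSP-game is legal.)

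The first gap is that $\Gamma$ could have an isolated vertex, which would be a singleton non-face of $\Delta$ and so is excluded by the standing assumption. The second and more serious gap is that a vertex $v$ of $\Delta$ may lie in \emph{no} minimal non-face, in which case it is not a vertex of $\Gamma$ at all and $B_\Gamma$ carries no component for it -- this is exactly the ``always-playable move'' obstruction mentioned before the statement. I would deal with it by peeling these vertices off. Call $v$ a \emph{cone point} of $\Delta$ if $F\cup\{v\}\in\Delta$ for every face $F$; a routine check shows this holds precisely when $v$ lies in no minimal non-face. Let $C$ be the set of cone points and $\Delta'=\{F\in\Delta:F\cap C=\emptyset\}$. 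The elementary lemma to verify is that $\Delta$ equals the cone $C*\Delta':=\{A\cup F:A\subseteq C,\ F\in\Delta'\}$ and that $\Delta$ and $\Delta'$ have the same minimal non-faces. If $\Delta'$ (equivalently $\Delta$) is a simplex, I would take $B$ to be the disjoint union of $|\L|$ triangles and $|\R|$ squares and $G$ the iSP-game in which Left places triangles, Right places squares, and nothing further is forbidden: every position is then legal, so $\Delta_{G,B}$ is the full simplex on $\L\sqcup\R$, as required.

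Assume now $\Delta'$, hence $\Delta$, is not a simplex. Then $\Delta'$ has no cone points (a cone point of $\Delta'$ lies in no minimal non-face of $\Delta'$, hence in none of $\Delta$, hence lies in $C$), so $\Gamma$ -- which, by the above, is equally the complex whose facets are the minimal non-faces of $\Delta'$ -- has vertex set exactly $(\L\cup\R)\setminus C$, and by the standing assumption it has no isolated vertex. Fix labellings of the vertices and edges of $\Gamma$ and apply \cref{thm:Gammagameandgraph} to obtain the $\Gamma$-game $G_\Gamma$ and $\Gamma$-board $B_\Gamma$ with $\Gamma_{G_\Gamma,B_\Gamma}=\Gamma$. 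Set $G:=G_\Gamma$, and let $B$ be $B_\Gamma$ together with, for each $v\in C$, one additional connected component, a copy of a Left-piece of $G_\Gamma$ if $v\in\L$ and of a Right-piece if $v\in\R$. Applying the cycle-length estimates from the proof of \cref{thm:Gammagameandgraph} componentwise, the basic positions of $(G,B)$ are exactly the $|(\L\cup\R)\setminus C|$ distinguished cycles of $B_\Gamma$ together with the $|C|$ new components (one playable piece on each, of the prescribed colour), so they biject with $\L\sqcup\R$ respecting the labelling. A piece on a new component is at graph distance $\infty$ from every other piece, so it can never belong to a sub-configuration whose distance set is an id-set of $\Gamma$; hence a position of $(G,B)$ is legal if and only if its restriction to $B_\Gamma$ is legal. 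By \cref{thm:Gammagameandgraph} and the Remark after \cref{def:gameComplexIdeal}, the legal positions of $(G_\Gamma,B_\Gamma)$ are the faces of $\SR{\facet{\Gamma}}$, that is, the subsets of $(\L\cup\R)\setminus C$ containing no minimal non-face of $\Delta'$, which are exactly the faces of $\Delta'$. Consequently the legal positions of $(G,B)$ are precisely the sets $A\cup F$ with $A\subseteq C$ and $F\in\Delta'$, that is, the faces of $C*\Delta'=\Delta$; and $G=G_\Gamma$ is an iSP-game by \cref{lem:illegal} (invariance depending only on the ruleset, not the board). This gives $\Delta=\Delta_{G,B}$ with the prescribed bipartition.

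I expect the main obstacle to be the bookkeeping on the vertex set of $B$: \cref{thm:Gammagameandgraph} already rules out spurious basic positions via its cycle-length bounds, and confining each added piece to its own component preserves that argument, but the genuinely new point is that no basic position goes missing -- which is precisely why cone points must be detected and reattached as free board components. The identities $\Delta=C*\Delta'$ and ``coning off leaves the minimal non-faces unchanged'' are straightforward but form the combinatorial heart of the reduction; the rest is an application of \cref{thm:Gammagameandgraph,lem:illegal} and the Stanley-Reisner dictionary.
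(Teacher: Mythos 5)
Your proposal is correct and follows essentially the same route as the paper: both split into the simplex case (disjoint triangles and squares with no restrictions) and the non-simplex case, where one takes $\Gamma=\facet{\SR{\Delta}}$, invokes \cref{thm:Gammagameandgraph}, and then restores the vertices lying in every facet (your ``cone points'') by adjoining to $B_\Gamma$ one free piece-shaped component per such vertex. Your explicit cone decomposition $\Delta=C*\Delta'$ and the remark that ghost vertices must be excluded are just slightly more detailed bookkeeping for the same argument.
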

\begin{proof}
We will take advantage of the disjunctive sum of two SP-games corresponding to the join of their legal complexes (see \cref{thm:disjunctiveStructure}).

Let $U=\{v_1,\ldots,v_\ell\}$ be the set of vertices contained in all facets of $\Delta$ and let $\Delta'=\langle F_1\setminus U,\ldots, F_k\setminus U\rangle$ where $F_1,\ldots F_k$ are the facets of $\Delta$. Note that $U$ could be empty or be the entire vertex set. Then $\Delta=\Delta' *\langle U\rangle$.

We will construct iSP-games $(R_1,B_1)$ and $(R_2,B_2)$ such that $\Delta_{R_1,B_1}=\Delta'$ and $\Delta_{R_2,B_2}=\langle U\rangle$. Then $(R_1,B_1)+(R_2,B_2)$ is an iSP-game with legal complex $\Delta$.

First, given $\Delta'$ let $\Gamma'=\facet{\SR{\Delta'}}$, \ie the simplicial complex whose facets correspond to the minimal non-faces of $\Delta'$.

Let the vertex set of $\Gamma'$ be bipartitioned into $\L$ and $\R$ the same way that the vertex set of $\Delta'$ is. Let $R_1$ be the $\Gamma'$-ruleset and $B_1$ be the $\Gamma'$-board, so that $\Gamma_{R_1,B_1}=\Gamma'$. 

Let $i$ be a vertex in $\Delta'$. Since $\Delta'$ by construction has at least one facet that does not contain $i$ we have that $i$ is also a vertex of $\Gamma'$. Thus the underlying rings of $\Gamma'$ and $\Gamma_{R_1,B_1}$ are the same, and it immediately follows that \[\Delta_{R_1,B_1}=\SR{\facet{\Gamma_{R_1,B_1}}}=\SR{\facet{\Gamma'}}=\Delta'.\]

Secondly, we construct $R_2$ and $B_2$ as follows: Let $n$ be the number of vertices in $\langle U\rangle$ and (re)label the vertices $1,\ldots, n$. Let the board $B_2$ be a disjoint union of $n$ cycles of size $3$ and $4$ and label these $1,\ldots, n$ so that cycle $i$ will have size $3$ if the vertex $i$ in $\langle U\rangle$ belongs to $\L$, and size $4$ if the vertex $i$ belongs to $\R$.

Let $R_2$ be the SP-ruleset in which Left plays cycles of length $3$, and Right plays cycles of length $4$. Note that $R_2$ is invariant.

It is easy to see that $\langle U\rangle=\Delta_{R_2,B_2}$.
\end{proof}

In the above proof, we constructed the iSP-game as a disjunctive sum of two iSP-games. If it is desired for some reason that a single ruleset and board are constructed, an alternative proof can be found in the PhD thesis of the second author \cite{HuntemannPhD}.

The following example demonstrates this construction.

\begin{example}
Consider the complex $\Delta=\langle ab, bc\rangle$, where the vertices are partitioned as $\L=\{a,b\}$ and $\R=\{c\}$. In this case $U=\{b\}$ and $\Delta'=\langle a,c\rangle$.

For $\Delta'$ we have $\Gamma'=\langle ac\rangle$, thus the graph $H$ is $P_2$. Since $n=2$, in the SP-ruleset $R_1$ Left will play cycles of length $n^4+4=20$ with one cycle of length $n^3=8$ added to a vertex, while Right plays cycles of length $n^4+5=21$ with a cycle of length 8 added to a vertex, and pieces may not have distance 2.

The board $B_1$ is given in the top half of \cref{fig:exampleFlag3}. Dashed, blue cycles consist of 20 vertices, and dotted, red cycles of 21 vertices. The smaller solid cycles consist of 8 vertices. It is now easy to check that $\Delta_{R_1,B_1}=\Delta$.

For $\langle U\rangle$, the ruleset $R_2$ is that Left may only play cycles of length 3, and Right only cycles of length 4, and the board is a cycle of length 3 (bottom half of \cref{fig:exampleFlag3}).

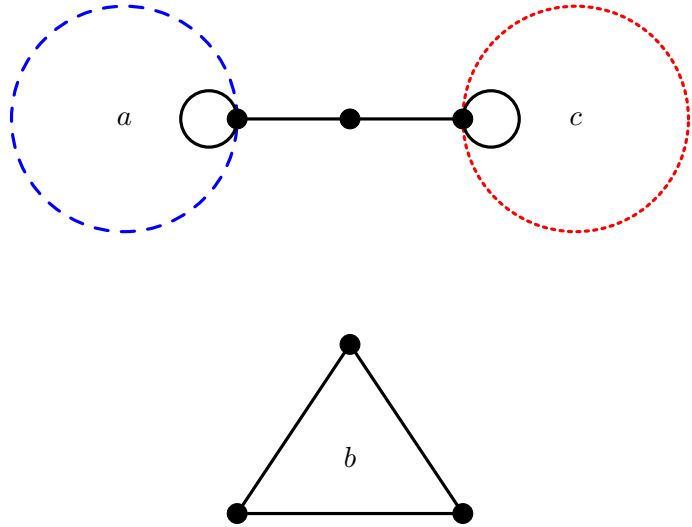
\begin{figure}[!ht]
\begin{center}
\definecolor{ffqqqq}{rgb}{1.,0.,0.}
\definecolor{qqqqff}{rgb}{0.,0.,1.}
\begin{tikzpicture}[line cap=round,line join=round,>=triangle 45,x=1.0cm,y=1.0cm,scale=0.75]
\draw [line width=1.2pt,dash pattern=on 5pt off 5pt,color=qqqqff] (2.,2.) circle (2.cm);
\draw [line width=1.2pt,dotted,color=ffqqqq] (10.,2.) circle (2.cm);
\draw [line width=1.2pt] (4.,2.)-- (8.,2.);
\draw [line width=1.2pt] (3.5,2.) circle (0.5cm);
\draw [line width=1.2pt] (8.5,2.) circle (0.5cm);
\draw[line width=1.2pt] (6,-2)--(8,-5)--(4,-5)--cycle;
\draw (2,2) node {$a$};
\draw (10,2) node {$c$};
\draw (6,-4) node {$b$};
\begin{scriptsize}
\draw [fill=black] (4.,2.) circle (5pt);
\draw [fill=black] (8.,2.) circle (5pt);
\draw [fill=black] (6.,-2.) circle (5pt);
\draw [fill=black] (6.,2.) circle (5pt);
\draw [fill=black] (8,-5) circle (5pt);
\draw [fill=black] (4,-5) circle (5pt);
\end{scriptsize}
\end{tikzpicture}
\end{center}
\caption{Constructing $B$ from $\Delta=\langle ab, bc\rangle$}
\label{fig:exampleFlag3}
\end{figure}

The sum $(R_1,B_1)+(R_2,B_2)$ is then the game $(R,B)$ where in $R$ Left may play cycles of length 3 or cycles of length 20 with an added cycle of length 8, Right cycles of length 4 or cycles of length 21 with an added cycle of length 8, and no two pieces may have distance 2, and the board $B$ is the disjoint union of the boards $B_1$ and $B_2$.

Then $\Delta=\Delta_{R,B}$.
\end{example}

Concluding our discussion of iSP-games, we have the following result.

\begin{theorem}[\textbf{Every SP-Game Tree Belongs To An iSP-Game}]
Given an SP-game $(R,B)$, there exists an iSP-game $(R',B')$ so that their game trees are isomorphic.
\end{theorem}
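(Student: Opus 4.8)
The plan is to assemble this theorem from the two main results already in hand: \cref{thm:legalinv}, which says every $(\L,\R)$-labelled simplicial complex is the legal complex of an iSP-game on some board, and \cref{thm:isomorphicgametrees}, which says two SP-games have isomorphic game trees precisely when their legal complexes are isomorphic as labelled complexes. So the argument will be short: pass from the given game to its legal complex, re-realize that complex by an iSP-game via \cref{thm:legalinv}, and then read off the game-tree isomorphism from \cref{thm:isomorphicgametrees}.

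First I would record the (essentially definitional) observation that the legal complex of the given SP-game $(G,B)$ is an $(\L,\R)$-labelled simplicial complex. Writing $\L$ for the set of basic positions carrying a Left piece and $\R$ for those carrying a Right piece, the vertex set of $\Delta_{G,B}$ is exactly $\L\sqcup\R$; this is precisely the extra layer of structure discussed after \cref{def:gameComplexIdeal}. (That $\Delta_{G,B}$ is a genuine simplicial complex, and not merely a hypergraph, is guaranteed by condition (iv) of \cref{def:placementgame}.)

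Next, apply \cref{thm:legalinv} to the $(\L,\R)$-labelled complex $\Delta := \Delta_{G,B}$. This produces an iSP-game $G'$ and a board $B'$ with $\Delta_{G',B'}=\Delta_{G,B}$, where moreover the set of Left positions of $G'$ is $\L$ and the set of Right positions is $\R$. In particular the legal complexes $\Delta_{G',B'}$ and $\Delta_{G,B}$ agree as $(\L,\R)$-labelled complexes — the identity map is an isomorphism that respects the bipartition. Now \cref{thm:isomorphicgametrees}, applied to $(G,B)$ and $(G',B')$, yields that their game trees are isomorphic, which is the assertion. (As a standard consequence one then gets that the game values of $(G,B)$ and $(G',B')$ coincide under both normal and mis\`ere play, cf.\ \cite{Siegel13}, so it is enough to study iSP-games.)

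I do not expect a genuine obstacle here, since all the technical work is already contained in \cref{thm:legalinv}. The only point that needs care is the bookkeeping of the $(\L,\R)$-labelling across the two invocations: \cref{thm:legalinv} must return a game whose Left/Right split matches that of $\Delta_{G,B}$ — which it does by its statement, including in the degenerate cases where $\Delta_{G,B}$ is a simplex or where one of $\L,\R$ is empty — and the hypothesis of \cref{thm:isomorphicgametrees} is about labelled isomorphism, which is exactly what the previous step provides. So the proof reduces to: form $\Delta_{G,B}$, feed it to \cref{thm:legalinv}, and invoke \cref{thm:isomorphicgametrees}.
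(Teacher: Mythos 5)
Your proposal is correct and is exactly the paper's proof: form the $(\L,\R)$-labelled legal complex $\Delta_{G,B}$, realize it by an iSP-game via \cref{thm:legalinv}, and conclude with \cref{thm:isomorphicgametrees}. No difference in approach worth noting.
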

\begin{proof}
Let $\Delta=\Delta_{R,B}$ with $\L$ the vertices corresponding to Left basic positions, and similarly $\R$. Then by \cref{thm:legalinv} we know that there exists an iSP-game $(R',B')$ such that $\Delta=\Delta_{R',B'}$ with the same bipartition. Since $\Delta_{R,B}=\Delta_{R',B'}$, we have by \cref{thm:isomorphicgametrees} that the game trees of $R$ played on $B$ and $R'$ played on $B'$ are isomorphic.
\end{proof}

This in particular implies that under most winning conditions (such as normal play or mis\`ere play) the game values of $R$ played on $B$ and $R'$ played on $B'$ are the same, implying that we can replace one by the other.

\section{Independence Games}\label{sec:independence}

Many of the games we have previously considered have illegal complexes that are graphs. This special class of SP-games is of further interest to us. For example, this class corresponds to flag complexes (see below for more).

\begin{definition}
An SP-ruleset $R$ is called an \textbf{independence ruleset} if for \textit{any} board $B$ the illegal complex $\Gamma_{R,B}$ is a graph without isolated vertices (\ie a pure one-dimensional simplicial complex). An SP-game $(R,B)$ is called an \textbf{independence game} if $R$ is an independence ruleset.
\end{definition}

Consider the illegal complex $\Gamma_{R,B}$ of an independence game $R$ on a board $B$. Let $\Gamma_{R,B}'$ be the graph on the vertex set $x_1,x_2,\ldots,x_m, y_1, y_2,\ldots, y_n$ (corresponding to the basic positions of $R$ played on $B$) with edges those of $\Gamma_{R,B}$. Thus the difference between $\Gamma_{R,B}$ and $\Gamma_{R,B}'$ are isolated vertices corresponding to basic positions that are always legal. For many independence games we have $\Gamma_{R,B}'=\Gamma_{R,B}$.

The \textbf{independence complex} of a graph $H$ is a simplicial complex with vertex set that of the graph and faces those sets of vertices that are independent in $H$, \ie no two vertices are adjacent. The term `independence game' was chosen for this class of games since the independent sets of $\Gamma_{R,B}'$ correspond to the legal positions of $R$ played on $B$, \ie the faces of $\Delta_{R,B}$. Thus in this case $\Delta_{R,B}$ is the independence complex of the graph $\Gamma_{R,B}'$.

Many SP-games, such as \col and \Snort, are independence games. \nogo is an example of an SP-game that is not an independence game. Even though $\Gamma_{\Nogo, B}$ is a graph for some boards (for example when $B$ is the graph on two vertices connected by an edge, \ie the path of length one, $P_2$), there are many others for which this is not the case. For example, $\Gamma_{\Nogo, P_3}$, given in \cref{fig:NogoP3aux}, has two-dimensional faces.

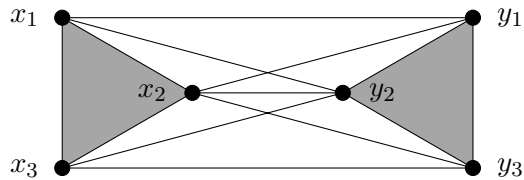
\begin{figure}[!ht]
\begin{center}
\begin{tikzpicture}
	\filldraw[fill=gray!70, draw=black] (0, 1)--(1.73, 0)--(0, -1)-- cycle;
	\filldraw[fill=gray!70, draw=black] (5.46, 1)--(3.73,0)--(5.46, -1)-- cycle;
	\draw (0,1)--(5.46,1)--(1.73,0)--(5.46,-1)--(0,-1)--(3.73,0)--(0,1);
	\draw (1.73,0)--(3.73,0);
	\filldraw (0,1) circle (0.1cm);
	\filldraw (1.73,0) circle (0.1cm);
	\filldraw (0,-1) circle (0.1cm);
	\filldraw (5.46,1) circle (0.1cm);
	\filldraw (3.73,0) circle (0.1cm);
	\filldraw (5.46,-1) circle (0.1cm);
	\draw (-0.5,1) node {$x_1$};
	\draw (1.2,0) node {$x_2$};
	\draw (-0.5,-1) node {$x_3$};
	\draw (5.96,1) node {$y_1$};
	\draw (4.26,0) node {$y_2$};
	\draw (5.96,-1) node {$y_3$};
\end{tikzpicture}
\end{center}
\caption{The Illegal Complex $\Gamma_{\Nogo, P_3}$}
\label{fig:NogoP3aux}
\end{figure}

One nice property of independence games is that playing an independence game $R$ on a board $B$ is equivalent to forming independent sets of the graph $\Gamma_{R,B}'$ while Left picks vertices in $\L$ and Right in $\R$.

A \textbf{flag complex} $\Delta$ is a complex whose minimal non-faces all have size 2 (see for example \cite{HH11}). In the case of independence games, since $\Gamma_{R,B}$ is a graph without isolated vertices, we have that $\Delta_{R,B}$ is flag.

Further note that the $\Gamma$-ruleset in the case of $\Gamma$ being a graph is always an independence ruleset (since minimal illegal positions are always pairs of pieces played). Using \cref{thm:legalinv} this implies the following.

\begin{proposition}[\textbf{iSP-Games of Flag Complexes}]\label{cor:independenceSP}
Given any SP-game $(R,B)$ such that $\Gamma_{R, B}$ is a non-empty graph, there exists an invariant independence game $(R',B')$ such that $\Delta_{R,B}= \Delta_{R', B'}$. In the case that $\Gamma_{R,B}$ has no isolated vertices, we also have $\Gamma_{R,B}=\Gamma_{R',B'}$.
\end{proposition}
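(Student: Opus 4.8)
The plan is to reduce this statement to the two earlier results about legal complexes, namely \cref{thm:legalinv} and \cref{thm:isomorphicgametrees}, after checking that the game $G'$ those results produce is in fact an independence game when $\Gamma_{G,B}$ is a graph. First I would set $\Delta = \Delta_{G,B}$ and recall, via the Remark following \cref{def:gameComplexIdeal}, that $\Gamma_{G,B} = \facet{\illegalI{G,B}}$ and $\Delta_{G,B} = \SR{\illegalI{G,B}}$; since $\Gamma_{G,B}$ is a non-empty graph, its facets are edges (and possibly isolated vertices), so $\facet{\SR{\Delta}}$ is exactly the graph obtained from $\Gamma_{G,B}$ by discarding any isolated vertices, \ie it equals $\Gamma_{G,B}'$ in the notation introduced before the statement. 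In particular $\facet{\SR{\Delta}}$ is a one-dimensional complex.

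Next I would apply the construction in Case~1 of the proof of \cref{thm:legalinv} to $\Delta$, giving an iSP-game $G'$ and board $B'$ with $\Delta_{G',B'} = \Delta$ and the same bipartition into $\L$ and $\R$. The key observation is that in that construction $G'$ is the $\Gamma$-game $G_\Gamma$ for $\Gamma = \facet{\SR{\Delta}}$, which we have just seen is a graph; as noted in the paragraph preceding the statement, the $\Gamma$-game of a graph is always an independence game, because every id-set is a single distance and hence every minimal illegal position consists of exactly two pieces. This property of the ruleset does not depend on the board, so $G'$ qualifies as an independence game in the sense of the definition in \cref{sec:independence}. (If $\Delta$ happened to be a simplex one would be in Case~2 instead, but a simplex has no minimal non-face of size $\geq 2$, so $\Gamma_{G,B}$ would have no edges, contradicting the hypothesis that it is a non-empty graph; thus we are always in Case~1.)

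Finally, having $\Delta_{G,B} = \Delta = \Delta_{G',B'}$ with matching bipartitions, \cref{thm:isomorphicgametrees} gives that the game trees agree, and in particular $\Delta_{G,B} = \Delta_{G',B'}$ as claimed. For the last sentence, suppose additionally that $\Gamma_{G,B}$ has no isolated vertices. Then $\Gamma_{G,B} = \facet{\SR{\Delta}} = \Gamma$, and in Case~1 of \cref{thm:legalinv} the board-adjustment step (adding disjoint cycles $C^l$ for vertices contained in every facet of $\Delta$) is precisely what guarantees $\Gamma_{G',B'} = \Gamma_{G_\Gamma, B_\Gamma} = \Gamma$ by \cref{thm:Gammagameandgraph}; hence $\Gamma_{G,B} = \Gamma_{G',B'}$. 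The only real point requiring care is the verification that "$\Gamma$-game of a graph $=$ independence game" holds uniformly over \emph{all} boards, not just $B_\Gamma$ — that is, that the definition of an independence game is met — but this follows immediately from the form of the ruleset, since forbidding certain single distances between pairs of pieces can never create a minimal illegal position with three or more pieces. So the main obstacle is bookkeeping: tracking which vertices of $\Delta$ survive into $\Gamma$ and confirming the underlying rings match, all of which is already handled inside the proof of \cref{thm:legalinv}.
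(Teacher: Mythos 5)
Your argument is correct and follows essentially the same route as the paper: apply \cref{thm:legalinv}, observe that the game it produces is the $\Gamma$-game of the graph $\facet{\SR{\Delta_{G,B}}}$ and hence an independence game because every minimal illegal position is a pair of pieces, and obtain the second claim from the equality of underlying rings via $\Gamma_{G',B'}=\facet{\SR{\Delta_{G',B'}}}=\facet{\SR{\Delta_{G,B}}}=\Gamma_{G,B}$. The one small divergence is that you dismiss the simplex case of \cref{thm:legalinv} as contradicting the hypothesis, whereas the paper retains it: if $\Gamma_{G,B}$ consists only of isolated vertices it is still a non-empty graph under the paper's reading, $\Delta_{G,B}$ is then a simplex, and the paper handles this sub-case in one line by noting that $G'$ has no illegal positions and is therefore trivially an independence game.
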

\begin{proof}
By \cref{thm:legalinv} there exists an iSP-ruleset $R'$ and board $B'$ such that $\Delta_{R,B}=\Delta_{R',B'}$. The ruleset $R_1'$ is the $\Gamma$-ruleset of $\Gamma_{R,B}'$ which, as mentioned above, is an independence ruleset. The ruleset $R_2'$ has no illegal positions, and thus is an independence ruleset trivially.

If $\Gamma_{R,B}$ has no isolated vertices, then the underlying rings of $\Delta_{R,B}$ and $\Delta_{R',B'}$ are the same, thus
\[\Gamma_{R',B'}=\facet{\SR{\Delta_{R',B'}}}=\facet{\SR{\Delta_{R,B}}}=\Gamma_{R,B}.\qedhere\]
\end{proof}

Equivalently, this proposition also states that given an SP-ruleset $R$ and board $B$ such that the minimal non-faces of $\Delta_{R,B}$ are all 1- and 2-element sets, there exists an SP-ruleset $R'$ whose legal complex is always flag and a board $B'$ such that $\Delta_{R,B}=\Delta_{R',B'}$.

As a direct consequence of \cref{cor:independenceSP}, applying \cref{thm:isomorphicgametrees}, we have that these games also have isomorphic game trees.

\begin{corollary}
Given any SP-game $(R,B)$ such that $\Gamma_{R, B}$ is a non-empty graph, there exists an invariant independence game $(R',B')$ such the game trees of $(R,B)$ and $(R',B')$ are isomorphic.
\end{corollary}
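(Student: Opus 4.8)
The plan is to combine the two results that immediately precede the corollary: \cref{cor:independenceSP} and \cref{thm:isomorphicgametrees}. This is a short deduction, so the proof proposal is correspondingly brief.

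First I would invoke \cref{cor:independenceSP}: given an SP-game $G$ and board $B$ with $\Gamma_{G,B}$ a non-empty graph, it produces an independence game $G'$ and board $B'$ with $\Delta_{G,B}=\Delta_{G',B'}$ (as $(\L,\R)$-labelled simplicial complexes, since all our constructions preserve the bipartition into Left and Right basic positions). Second, I would feed this equality of legal complexes into \cref{thm:isomorphicgametrees}, which states that two SP-games have isomorphic legal complexes if and only if their game trees are isomorphic. Applying the "only if" direction to $G$ on $B$ and $G'$ on $B'$ yields that the game trees of $(G,B)$ and $(G',B')$ are isomorphic, which is exactly the claim.

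The only point requiring a word of care is that \cref{thm:isomorphicgametrees} as stated concerns legal complexes, and its proof uses that the bipartitions of the two legal complexes agree (this is how the Left/Right labels on the game-tree edges are matched up). So in applying it here I should note that the isomorphism $\Delta_{G,B}=\Delta_{G',B'}$ coming from \cref{cor:independenceSP} respects the $(\L,\R)$-labelling, which it does because \cref{thm:legalinv} — on which \cref{cor:independenceSP} relies — explicitly preserves the partition into Left and Right positions.

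There is no real obstacle here; the work was already done in establishing \cref{cor:independenceSP} and \cref{thm:isomorphicgametrees}. If anything, the "hard part" is purely bookkeeping: making sure the chain $\Delta_{G,B}=\Delta_{G',B'}$ is read as an equality of labelled complexes so that \cref{thm:isomorphicgametrees} applies verbatim. The proof is then a single sentence chaining the two citations, and I would write it as such.
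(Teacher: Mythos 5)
Your proposal is correct and matches the paper's own (implicit) argument exactly: the corollary is stated as a direct consequence of \cref{cor:independenceSP} combined with \cref{thm:isomorphicgametrees}, which is precisely the chain you describe. Your extra remark about the equality $\Delta_{G,B}=\Delta_{G',B'}$ respecting the $(\L,\R)$-labelling is a sensible piece of bookkeeping that the paper leaves implicit.
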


\section{Further Questions and Work}\label{sec:discussion}

In this section, we will be discussing some potential further questions and avenues to explore.

The $\Gamma$-board and pieces of the $\Gamma$-rulesey have many more vertices than $\Gamma$ itself. Thus we are interested in whether constructions of a ruleset $R$ and board $B$ are possible for every simplicial complex $\Gamma$ without isolated vertices in which the pieces that Left and Right play occupy only one vertex so that $\Gamma=\Gamma_{R,B}$. This seems unlikely though, thus an interesting question is for which class of simplicial complexes such a construction is possible.

Similarly, we are also interested in for which simplicial complexes $\Delta$ we can find a ruleset $R$ and board $B$ with pieces only a single vertex so that $\Delta=\Delta_{R,B}$.

Simplicial trees and forests, which are generalizations of graph trees and forests, are flag complexes (see \cite[Lemma 9.2.7]{HH11}). Since many properties of simplicial trees are known (see for example \cite{Faridi04} and \cite{Faridi05}) it seems that this class of flag complexes provides a good start to studying whether simpler constructions are possible.

Finally, it is of interest if each game value possible under normal play conditions is also the game value of some SP-game. This problem has received attention for specific SP-rulesets (for \textsc{Domineering} see for example \cite{Kim96,UB15}, for \col and \snort see \cite{BCG04}), and was recently positively answered for a non-SP-game (see \cite{CS16}). Since SP-games are much easier to understand than many other combinatorial games, if the answer to this question is positive, it would provide an excellent new tool for studying combinatorial games. Whether or not this is the case, a similar, but stronger, question is if the simplest game (essentially the one with smallest game tree) in each equivalence class containing an SP-game is itself an SP-game. Knowing that each simplicial complex is the legal complex of some SP-game has been indispensable in the exploration of those two questions (see \cite{Huntemann17}).

\bibliographystyle{plain}
\bibliography{Biblio2018Oct}

\end{document}